  \newtheorem{theorem}{Theorem}[subsection]
  \newtheorem{proposition}[theorem]{Proposition}
  \newtheorem{lemma}[theorem]{Lemma}
  \newtheorem{corollary}[theorem]{Corollary}
\theoremstyle{definition}
  \newtheorem{remark}[theorem]{Remark}
  \newtheorem{example}[theorem]{Example}
\theoremstyle{remark}
\theoremstyle{definition}
  \newtheorem{definition}[theorem]{Definition} 
  \newtheorem{question}[theorem]{Question}
\renewcommand {\geq} {\geqslant}
\def\1{{\bf 1}}
\newextarrow{\xbigtoto}{{20}{20}{20}{20}}
   {\bigRelbar\bigRelbar{\bigtwoarrowsleft\rightarrow\rightarrow}}
\DeclareSymbolFont{largesym}{OML}{cmm}{m}{it}
\DeclareMathSymbol{\nstnsmall}{0}{largesym}{"22}
\title{Rational connectivity and \\ analytic contractibility}
\author{Morgan Brown }
\address{{\bf Morgan Brown}\newline Department of Mathematics\\
University of Miami\\
Coral Gables, FL 33146 USA}
\email{mvbrown@math.miami.edu}
\author{Tyler Foster}
\address{{\bf Tyler Foster}\newline Department of Mathematics\\
University of Michigan\\
Ann Arbor, MI 48109, USA}
\email{tyfoster@umich.edu}
\date {\today}
\subjclass[2010]{Primary 32P05, 32C18; Secondary 14E30}
\keywords{non-archimedean analytic geometry, Berkovich spaces, rationally connected varieties, minimal model program}
\begin{document}

\begin{abstract}
Let $k$ be an algebraically closed field of characteristic $0$, and let $f:X \longrightarrow Y$ be a morphism of smooth projective varieties over the ring $k((t))$ of formal Laurent series. We prove that if a general geometric fiber of $f$ is rationally connected, then the induced map $f^{\mathrm{an}}:X^{\mathrm{an}}\longrightarrow Y^{\mathrm{an}}$ between the Berkovich analytifications of $X$ and $Y$ is a homotopy equivalence. Two important consequences of this result are that the Berkovich analytification of any $\mathbb{P}^{n}$-bundle over a smooth projective $k((t))$-variety is homotopy equivalent to the Berkovich analytififcation of the base, and that the Berkovich analytification of a rationally connected smooth projective variety over $k((t))$ is contractible.
\end{abstract}

\maketitle

\section{Introduction}\label{intro}
	Throughout this paper, a {\em variety} over a field $K$ is any integral, separated, finite type $K$-scheme. Fix an algebraically closed field $k$ of characteristic $0$ once and for all. Given a $k$-scheme $S$, we say that $X\longrightarrow S$ is a variety over $S$ if $X$ is an integral, separated scheme of finite type scheme over $S$. We use the term \emph{generic point} to mean the point of the variety $X$ corresponding to its field of rational functions, while a \emph{general point} refers to any closed point chosen inside some open set. If $T$ is a second $S$-scheme, then a \emph{general $T$-point} of $X$ refers to any $T$-valued point $T\longrightarrow X$ whose image is a general point.

\vskip .5cm

\subsection{A suggestive dictionary} Let $K$ be a topological field, topologically complete with respect to a  fixed norm $|-|:K\longrightarrow\mathbb{R}_{\ge0}$. Assume that this norm satisfies the ultrametric triangle inequality $|a+b|\le\mathrm{max}\{|a|,|b|\}$ for all $a,b\in K$. We call a field $K$ complete with respect to such a norm a {\em non-archimedean field}.

In close analogy with the operation of complex analytification, Berkovich showed how to associate a well behaved topological space $X^{\mathrm{an}}$ to each finite type $K$-scheme $X$ \cite[\S 3.4]{Berk}. The space $X^{\mathrm{an}}$ is called the {\em Berkovich analytification} of $X$. Berkovich observed that there exists a partial dictionary between algebro-geometric properties of $X$ and topological properties of $X^{\mathrm{an}}$. Namely:

\vskip .25cm

{\bf Theorem  \cite[Theorem 3.4.8]{Berk}.} {\em Let $X$ be a scheme locally of finite type over $K$. Then:
\begin{itemize}
\item[{\bf (i)}] $X$ is separated $\iff$ $X^{\mathrm{an}}$ is Hausdorff.
\item[{\bf (ii)}] $X$ is proper $\iff$ $X^{\mathrm{an}}$ is Hausdorff and compact.
\item[{\bf (iii)}] $X$ is connected $\iff$ $X^{\mathrm{an}}$ is pathwise connected.
\item[{\bf (iv)}] The algebraic dimension of $X$ $=$ the topological dimension of $X^{\mathrm{an}}$.
\vskip .25cm
\end{itemize}}

\vskip .25cm

\noindent
Naturally, one would like to continue Berkovich's dictionary, and so we ask:

\begin{question}\label{giant question} Which other algebro-geometric properties of $X$ correspond to topological properties of $X^{\mathrm{an}}$?
\end{question}

One fundamental invariant of a topological space is its homotopy type. Our main theorem in the present article provides a tool for relating the birational geometry of $K$-varieties to the homotopy types of their Berkovich analytifications, in the special case that $K=k((t))$, the field of formal Laurent series over an algebraically closed field $k$ of characteristic $0$:

\begin{theorem}\label{main}
Let $k$ be an algebraically closed field of characteristic $0$, and let $f: X \longrightarrow Y$ be a surjective map of smooth projective varieties over the nonarchimedean field $K= k((t))$. Suppose that for a general geometric point $q$ of  $Y$ the fiber $X_q$ is rationally connected. Then the induced map of Berkovich spaces $f^{\mathrm{an}}:X^{\mathrm{an}}\longrightarrow Y^\mathrm{{an}}$ is a homotopy equivalence.
\end{theorem}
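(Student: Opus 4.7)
The plan is to compare $X^{\mathrm{an}}$ and $Y^{\mathrm{an}}$ by passing to the Berkovich skeleta of suitable integral models, reducing the theorem to a statement about a map between finite simplicial complexes, and then deducing that this map is a homotopy equivalence from the rational connectedness hypothesis via a fiberwise contractibility argument.

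First I would construct compatible SNC models. Using Hironaka resolution together with semistable reduction (possibly after the harmless finite base change $t \mapsto t^{1/n}$, whose effect on the homotopy type of the analytifications can be controlled), choose regular proper models $\mathcal{X}$ of $X$ and $\mathcal{Y}$ of $Y$ over $R = k[[t]]$ whose special fibers $\mathcal{X}_0$ and $\mathcal{Y}_0$ are strictly normal crossings divisors, and arrange that $f$ extends to a proper morphism $\tilde{f}:\mathcal{X}\to\mathcal{Y}$. By further blowing up and invoking toroidalization of morphisms (Abramovich-Karu), I would refine $\tilde{f}$ to a toroidal morphism of toroidal embeddings, so that it descends to a piecewise linear map $\Delta(\tilde{f}):\Delta(\mathcal{X}_0)\to\Delta(\mathcal{Y}_0)$ between the dual complexes of the special fibers. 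The Berkovich-Thuillier theorem then supplies canonical strong deformation retractions of $X^{\mathrm{an}}$ and $Y^{\mathrm{an}}$ onto their respective dual complexes, embedded as the Berkovich skeleta, and these retractions are compatible with $\tilde{f}^{\mathrm{an}}$. It therefore suffices to prove that $\Delta(\tilde{f})$ is a homotopy equivalence of simplicial complexes.

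The heart of the proof, and the step I expect to be the main obstacle, is to extract this simplicial homotopy equivalence from the assumption that the general geometric fiber of $f$ is rationally connected. I would attack it fiberwise. For each open simplex $\sigma$ of $\Delta(\mathcal{Y}_0)$, corresponding to a stratum $Z_{\sigma}\subset\mathcal{Y}_0$, the preimage $\Delta(\tilde{f})^{-1}(\overline{\sigma})$ is a subcomplex of $\Delta(\mathcal{X}_0)$ recording the combinatorics of $\tilde{f}^{-1}(Z_{\sigma})$ with its induced SNC structure. Rational connectedness of the general fiber of $f$ should propagate, via Graber-Harris-Starr-type stability under base change, to rational connectedness of the generic fiber of each restricted degeneration. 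The key remaining geometric input is then the statement that \emph{the dual complex of any SNC degeneration of a smooth projective rationally connected variety is contractible}. Granting this, contractibility of all point preimages of $\Delta(\tilde{f})$ combined with a standard contractible-fibers criterion for simplicial maps (a Quillen-type fiber lemma, or a direct cellular argument via the nerve of a suitable open cover) would yield that $\Delta(\tilde{f})$ is a homotopy equivalence. To establish the RC-contractibility input itself I would induct on dimension, using the relative minimal model program (Birkar-Cascini-Hacon-McKernan) on the SNC pair $(\mathcal{X},\mathcal{X}_0)$ to collapse divisorial components one by one, and exploiting the abundance of moving rational curves on an RC variety to show at each step that the homotopy type of the dual complex is preserved, until the entire special fiber collapses to a point.
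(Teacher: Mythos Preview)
Your outline shares the paper's opening move---reducing to a comparison of dual complexes of snc models via the Nicaise--Xu retraction---but diverges at the crucial technical step, and there are two genuine gaps.

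First, and most seriously, you propose to run the relative MMP (BCHM) on the pair $(\mathcal{X},\mathcal{X}_0)$ over $\mathcal{Y}$. But $\mathcal{X}$ and $\mathcal{Y}$ are schemes over $k[[t]]$, which is not of finite type over a field, and the results of \cite{BCHM} are not established in that generality. The paper confronts exactly this obstacle: its main technical contribution (Section~\ref{spreading out section}, especially Theorem~\ref{curvemodel}) is a spreading-out argument that replaces the $k[[t]]$-model $\mathscr{X}\to\mathscr{Y}$ by a family $V_C\to W_C$ over a smooth $k$-curve $C$, with the same dual complexes over a marked point $p$ and with the same relative rational connectedness. Only after this reduction can one invoke BCHM and the dlt collapse results of \cite{dFKX}. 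Your proposal omits this step entirely, and without it the MMP you need is unavailable.

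Second, your fiberwise strategy---showing that $\Delta(\tilde{f})^{-1}(\overline{\sigma})$ is contractible for each simplex $\sigma$ of $\Delta(\mathcal{Y}_0)$, then applying a Quillen-type lemma---is not how the paper proceeds and is itself problematic. The strata $Z_\sigma\subset\mathcal{Y}_0$ live in the special fiber over $k$; the restriction $\tilde f^{-1}(Z_\sigma)\to Z_\sigma$ is not in any evident way an ``SNC degeneration over a DVR of a rationally connected variety,'' so the absolute contractibility theorem \cite[Theorem~4]{dFKX} does not apply to its dual complex. Graber--Harris--Starr does not supply the propagation you claim. The paper instead proves a \emph{relative} version of \cite[Theorem~4]{dFKX} (Theorem~\ref{mapdegen}): after spreading out to a curve, it runs a single global $(K_X+\Delta_X)$-MMP with scaling over $Y$, uses \cite[Corollary~22]{dFKX} to see that each birational step preserves the homotopy type of the dual complex, handles the resulting Mori fiber space $X'\to Z$ via the Picard-number-one identification of dual complexes \cite[Proposition~40]{dFKX}, resolves $Z$, and inducts on relative dimension. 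No fiber-by-fiber analysis of $\Delta(\tilde f)$ is performed.
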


As immediate corollaries, we have

\begin{corollary}\label{pb}
If $Y$ is a smooth $k((t))$-variety and $f:X\longrightarrow Y$ is a $\mathbb{P}^{n}$-bundle over $Y$, then $f^{\mathrm{an}}:X^{\mathrm{an}}\longrightarrow Y^{\mathrm{an}}$ is a homotopy equivalence.
\end{corollary}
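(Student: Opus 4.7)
The plan is to deduce this corollary directly from Theorem \ref{main}, by checking each of its hypotheses for the bundle map $f:X\to Y$ in turn and reducing everything to a single geometric fact about projective space. Since a $\mathbb{P}^n$-bundle map is surjective by construction and since every geometric fiber is (isomorphic to) projective $n$-space, the corollary should follow once smoothness and projectivity of the total space are checked and the rational connectivity of $\mathbb{P}^n$ is recalled.

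First I would carry out the structural check that $X$ satisfies the same hypotheses as $Y$. Taking $Y$ to be smooth projective over $K = k((t))$ (as required for Theorem \ref{main} to apply), smoothness of $X$ follows because this property is \'etale-local on the base and $X$ restricted to a small enough open $U\subset Y$ looks like $U\times\mathbb{P}^n$; projectivity of $X$ over $K$ follows because $f$ is a projective morphism with projective base, so the composite $X\to Y\to \mathrm{Spec}\,K$ is projective. These are standard structural checks and I would not grind through the details.

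Next I would address the rational connectivity hypothesis of Theorem \ref{main}. By definition of a $\mathbb{P}^n$-bundle, every geometric fiber of $f$ is isomorphic to projective $n$-space over the appropriate algebraically closed residue field. Since any two closed points of $\mathbb{P}^n$ lie on a common line, $\mathbb{P}^n$ is rationally connected, and so the hypothesis of Theorem \ref{main} is automatically satisfied at every geometric point of $Y$, in particular at a general one. Theorem \ref{main} then delivers the conclusion that $f^{\mathrm{an}}:X^{\mathrm{an}}\to Y^{\mathrm{an}}$ is a homotopy equivalence.

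The principal difficulty in this corollary is not the proof itself but the theorem it invokes: once Theorem \ref{main} is in hand, this application is essentially automatic, amounting only to the observation that projective space is rationally connected, combined with the routine verification that a projective bundle over a smooth projective base is itself smooth and projective. No obstacle beyond the main theorem is anticipated.
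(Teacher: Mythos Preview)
Your proposal is correct and matches the paper's approach exactly: the paper states this corollary as an immediate consequence of Theorem~\ref{main} without further argument, and your verification that a $\mathbb{P}^n$-bundle over a smooth projective base is smooth, projective, surjective, with rationally connected geometric fibers is precisely the routine check needed. The only minor point is that Theorem~\ref{main} as stated concludes that $X^{\mathrm{an}}$ and $Y^{\mathrm{an}}$ are homotopy equivalent rather than that $f^{\mathrm{an}}$ itself realizes the equivalence, but the proof of the theorem (via the induced map on dual complexes) in fact establishes the stronger statement, so no gap arises.
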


\begin{corollary}\label{rc}
Let $X$ be a smooth projective $k((t))$-variety. If $X$ is rationally connected, then its Berkovich analytification $X^{\mathrm{an}}$ has the homotopy type of a point, i.e., is contractible.
\end{corollary}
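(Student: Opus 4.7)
The plan is to deduce this corollary as a direct application of Theorem \ref{main}. Take $Y = \Spec K$ and let $f \colon X \lra Y$ be the structure morphism. Then $Y$ is a smooth projective $K$-variety (of dimension zero), and $f$ is surjective. Since $Y$ consists of a single point, the hypothesis of Theorem \ref{main} on a general geometric fiber of $f$ amounts to the requirement that the base change $X_{\bar K} = X \times_K \Spec{\bar K}$ be rationally connected. In characteristic zero, this is precisely the content of the hypothesis that $X$ itself is rationally connected. All hypotheses of Theorem \ref{main} are therefore satisfied.

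By Theorem \ref{main}, $X^{\mathrm{an}}$ is homotopy equivalent to $Y^{\mathrm{an}}$. It remains to note that $Y^{\mathrm{an}} = (\Spec K)^{\mathrm{an}}$ is a single point: a point of $(\Spec K)^{\mathrm{an}}$ corresponds to an absolute value on the residue field of a point of $\Spec K$ extending the given absolute value on $K$, and the only such datum is $|-|$ on $K$ itself. Hence $X^{\mathrm{an}}$ has the homotopy type of a point, i.e.\ is contractible.

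There is no essential obstacle here: the whole content of Corollary \ref{rc} has been absorbed into Theorem \ref{main}, which does the real work via semistable reduction, skeleton constructions, and the rationally connected fiber input. Once the main theorem is granted, the corollary follows formally. The only step that demands any justification at all is the translation between "$X$ rationally connected" as a property of the $K$-variety $X$ and rational connectivity of the geometric fiber $X_{\bar K}$, and this is the standard convention in the subject over fields of characteristic zero.
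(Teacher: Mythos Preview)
Your proposal is correct and is precisely the immediate deduction the paper has in mind: apply Theorem \ref{main} with $Y=\Spec K$, note that $(\Spec K)^{\mathrm{an}}$ is a single point, and use that rational connectedness of $X$ over $K$ is equivalent to rational connectedness of the geometric fiber $X_{\overline{K}}$. The paper states Corollary \ref{rc} without a separate proof, listing it among the ``immediate corollaries'' of Theorem \ref{main}, so your argument is exactly what is intended.
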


Berkovich showed that for projective space $\mathbb{P}_{\!K}^{n}$, of any dimension $n$ over any non-archimedean field $K$, the analytification $(\mathbb{P}_{\!K}^{n})^{\mathrm{an}}$ is contractible \cite[Theorem 6.1.5]{Berk}. The simplest example of a rationally connected variety is projective $n$-space, so Corollary \ref{rc} can be seen as a broad extension of Berkovich's contractibility result in the special case that $K=k((t))$, and Corollary \ref{pb} can be seen as an extension of Berkovich's result to the relative setting.

Finally, given $X$ and $Y$ smooth, projective and birational, we may choose a smooth variety $Z$ admitting surjective maps to both $X$ and $Y$, so that Theorem \ref{main} implies Musta{\c{t}}{\u{a}} and Nicaise's result that the homotopy type of the Berkovich analytification is a birational invariant  \cite{MN}, \cite[2.4.13]{Nic2}.

Recent work of Musta{\c{t}}{\u{a}} and  Nicaise  \cite{MN} and Nicaise and Xu \cite{NX} explores connections between the minimal model program (MMP) and the geometry of Berkovich spaces. We adopt many of their techniques in the proof of Theorem \ref{main}. We also use recent results by de Fernex, Koll\'ar, and Xu on the homotopy type of the dual complex of a resolution of singularities \cite{dFKX}. It is our use of these results that forces us to work over the non-archimedean field $k((t))$, with $k$ algebraically closed of characteristic $0$. It remains an open question  whether or not a counterpart to Theorem \ref{main}, or either of its Corollaries \ref{pb} or \ref{rc}, holds over more general non-archimedean fields.

\subsection{Organization of the paper} In Section \ref{skeleta}, we briefly review Berkovich analytifications and their skeleta. We use results of \cite{MN} and \cite{NX} to construct a skeleton $\mathrm{Sk}(\mathcal{X})\subset{X^{\mathrm{an}}}$, and we reduce the proof of Theorem \ref{main} to a verification that the map $f:X\longrightarrow Y$ appearing in the statement of the theorem induces a homotopy equivalence $\mathrm{Sk}(\mathcal{X})\simeq\mathrm{Sk}(\mathcal{Y})$

In Section \ref{rcvarieties}, we review rational connectivity of algebraic varieties in both the absolute and relative settings.

The main technical contribution of the paper occurs in Section \ref{spreading out section}. Here we show that if $\mathcal{X}$ is an snc $k[[t]]$-model of a rationally connected variety, then we can construct a degenerating family of rationally connected varieties over an algebraic curve, such that the dual complex of the special fiber of this degenerating family is homeomorphic to our skeleton $\mathrm{Sk}(\mathcal{X})$. We actually need a relative version of this statement in order to prove Theorem \ref{main}, and it is the relative version that we prove in Section \ref{spreading out section}. This puts us in a situation where the results of \cite{dFKX} become applicable.

In Section \ref{applying dFKX}, we establish a relative version of \cite[Theorem 41]{dFKX}.  As a consequence, we are able to show that the special fiber of a snc degeneration of rationally connected varieties always has a rationally connected component, giving an affirmative answer to a question of Koll\'ar in \cite[Question 11]{Kol2}. We then complete the proof of Theorem \ref{main}.

%%%%%%%%%%%%%%%%%%%%%%%%%%%%%%%%%%%%%%%%
%%%%%%%%%%%%%%%%%%%%%%%%%%%%%%%%%%%%%%%%
%%%%%%%%%%%%%%%%%%%%%%%%%%%%%%%%%%%%%%%%

\subsection{Thanks}The authors would like to thank Matt Baker, Mattias Jonsson, Eric Katz, Mircea Musta{\c{t}}{\u{a}}, James McKernan, Sam Payne, Joe Rabinoff, Chenyang Xu, and David Zurieck-Brown for invaluable discussions over the course of writing this article. Several of these discussions occurred at the BIRS workshop ``Specialization of Linear Series for Algebraic and Tropical Curves'', at which the first author was a participant.  We would like to thank Sam Payne in particular for remarks on an earlier draft of this paper. His remarks led to a significant strengthening of our results.
We would also like to thank the reviewers. Their remarks greatly improved this paper, and brought our attention to a gap in the previous draft.
	
Both authors were partially supported by NSF RTG grant DMS-0943832 during the completion of this work. The first author was also supported by NSF grants DMS-1200656 and DMS-1265263 for part of the completion of this work. The first author would like to thank UCSD for hosting him as a visitor during part of the completion of this work.
\section{Skeleta in Berkovich spaces}\label{skeleta}

\subsection{The Berkovich analytification} Let $K$ be a field complete with respect to a non-archimedean norm $|-|:K\longrightarrow\mathbb{R}_{\ge0}$. Assume that this norm is {\em discrete}, in the sense that its value group $|K^{\times}|$ is a discrete subgroup of $\mathbb{R}_{\ge0}$. Let $R=\{a\in K:|a|\le1\}$ denote the ring of integers in $K$, with unique maximal ideal $\mathfrak{m}=\{a\in K:|a|<1\}$, and denote its residue field $k\overset{{}_{\mathrm{def}}}{=}R/\mathfrak{m}$.

Let $X$ be a proper $K$-variety, and let $X^{\mathrm{an}}$ denote its Berkovich analytification. A variety for us will always be separated, and hence its Berkovich analytification will always be a Hausdorff topological space. In this article, we consider $X^{\mathrm{an}}$ solely as a topological space, completely ignoring the analytic structure sheaf on $X^{\mathrm{an}}$.

Let us briefly recall that if $U=\mathrm{Spec}_{\ \!}A$ is an affine local chart on $X$, then $U^{\mathrm{an}}$ is an open subset of $X^{\mathrm{an}}$ given by
$$
U^{\mathrm{an}}
\ =\ 
\left\{
\begin{array}{c}
\mbox{multiplicative seminorms }
\\
|-|_{x}:A\!\longrightarrow\!\mathbb{R}_{\ge0}\mbox{\ that restrict}
\\
\mbox{to the norm }|-|\mbox{ on }K\!\subset\!A
\end{array}
\right\}.
$$
Each function $f\in A$ determines a map $\mathrm{ev}_{f}:U^{\mathrm{an}}\longrightarrow\mathbb{R}_{\ge0}$ that takes $|-|_{x}\mapsto |f|_{x}$. The topology on $U^{\mathrm{an}}$ is the coarsest topology that renders each of these maps continuous. We glue $X^{\mathrm{an}}$ from these topological spaces $U^{\mathrm{an}}$, and the resulting topological space is independent of the particular affine cover we use.

\subsection{Skeleta} The Berkovich analytification tends to be large and complicated. For example, the point set underlying the Berkovich analytification of a curve has a natural structure of an infinite graph. One powerful technique for understanding a given Berkovich analytification is to produce a smaller space inside $X^{\mathrm{an}}$, called a {\em skeleton}, which reflects much of the geometry of the ambient space.  Specifically, this skeleton is supposed to be a finite simplicial complex onto which $X^{\mathrm{an}}$ deformation retracts. In the case of curves, the skeleton turns out to be a finite graph. 

A general technique for producing a skeleton is to choose a model $\mathcal{X}$ of $X$ over the local ring $R$ of integers in $K$, such that the special fiber $\mathcal{X}_{k}$ is a simple normal crossing divisor in $\mathcal{X}$. Each prime component of the special fiber corresponds to a point in ${X^{\mathrm{an}}}$, and the full dual complex  of $\mathcal{X}_{k}$ embeds as a closed subspace of ${X^{\mathrm{an}}}$. Note though that because this construction requires the choice of a model $\mathcal{X}$, it is not canonical.

It is often desirable to produce a skeleton that is canonical in the sense that it can be recovered uniquely from $X$. One construction which produces such a skeleton, called the {\em essential skeleton}, is due to Musta{\c{t}}{\u{a}} and Nicaise \cite{MN}, following work of Kontsevich and Soibelman \cite{KS} on mirror symmetry. Their construction uses weight functions coming from pluricanonical forms on $X$. Nicaise and Xu \cite{NX} use this construction to study degenerations of Calabi-Yau varieties and, more generally, smooth projective varieties with semi-ample canonical bundle.  When $X$ is a Calabi-Yau variety, one can use an approximation argument to show that the the essential skeleton is a strong deformation retract of $X^{\mathrm{an}}$. If $X$ is of general type, and if the canonical divisor $K_X$ is semi-ample, one expects to be able to produce a minimal model $\mathcal{X}$ over $R$, with $K_{\mathcal{X}}$ semi-ample. This model will be unique up to simple birational modifications called flops. Nicaise and Xu show that when $X$ is smooth and projective with semi-ample canonical bundle, the skeleton constructed in \cite{MN} is given by the dual complex of any minimal dlt model of $X$. When $X$ is defined over an algebraic curve \cite[Theorem 2.2.6]{NX}, one can use the MMP to produce a minimal dlt model model of $X$ over $R$. In their recent preprint \cite{KNX}, Koll\'ar, Nicaise, and Xu show that a minimal dlt model of $X$ always exists over $R$, regardless of whether or not $X$ is defined over a curve.

On the other hand, a smooth rationally connected variety has no pluricanonical forms. When one runs the minimal model program on a rationally connected variety $X$, the end result is a Mori fiber space. In contrast to the general type case, there may be  many different ways of expressing $X$ as birational to a Mori fiber space. Thus from the perspective of minimal model theory, one should not expect there to be a distinguished way of building a skeleton for a rationally connected $X$.

\subsection{Construction of a non-canonical skeleton} Let $X$ be a proper $K$-variety. An {\em $R$-model} of $X$ is any separated flat $R$-scheme $\mathcal{X}$ with generic fiber $\mathcal{X}_{K}\cong X$. A {\em proper snc-model} of $X$ is any proper $R$-model $\mathcal{X}$ of $X$ such that:
\vskip .2cm
\begin{itemize}
\item[{\bf (i)}]
$\mathcal{X}$ is regular;
\vskip .2cm
\item[{\bf (ii)}] The special fiber $\mathcal{X}_{k}$ is a strict normal crossing divisor in $\mathcal{X}$.
\end{itemize}
\vskip .2cm

Assume that our proper $K$-variety $X$ is regular, connected, and that it has a proper snc-model $\mathcal{X}$. As Musta\c{t}\u{a} and Nicaise explain in \cite[\S 3]{MN}, even when the special fiber $\mathcal{X}_{k}$ is not reduced, so that the $\mathfrak{m}$-adic completion $\widehat{\mathcal{X}}$ is not pluristable, we can still construct a cell complex $D(\mathcal{X}_{k})\subset\mathcal{X}^{\mathrm{an}}$ from $\mathcal{X}_{k}$ as follows.

Points in $D(\mathcal{X}_{k})$ are pairs $(\xi,x)$ where $\xi$ is any generic point of the intersection of finitely many irreducible components of $\mathcal{X}_{k}$, and where $x=(x_{1},\dots,x_{m})$ is any point in the topological space $D^{\circ}(\xi)\subset\mathbb{R}^{m}_{>0}$ cut out by the equation
$$
\underset{\mathrm{containing\ }\xi}{\underset{V_{i}\mathrm{\ of\ }\mathcal{X}_{k}}{\underset{\mathrm{components}}{\sum_{\mathrm{irreducible}}}}}\!\!\!x_{i}\ \ =\ 1.
$$
Here $m$ is the number of irreducible components of $\mathcal{X}_{k}$ containing $\xi$.
We can attach these spaces $D^{\circ}(\xi)$ to one another via boundary relations induced by the specialization relations between the generic points $\xi$ (see \cite[\S 3]{Nic} for details). Note that the resulting cell complex $D(\mathcal{X}_{k})$ coincides with the cell complex $\Delta\big(\mathrm{supp}(\mathcal{X}_{k})\big)$ that de Fernex, Koll{\'a}r, and Xu construct in \cite[\S 2]{dFKX}.

At each point $(\xi,x)$ in $D(\mathcal{X}_{k})$, with $x=(x_{1},\dots,x_{m})$, let $V_{1},\dots,V_{m}$ be the set of irreducible components of $\mathcal{X}_{k,\mathrm{red}}$ containing the generic point $\xi$. For each $1\le i\le m$, let $N_{i}$ denote the multiplicity of the component $V_{i}$ in the special fiber $\mathcal{X}_{k}$. Then the coordinatewise rescaled $m$-tuple
$$
v\ =\ \Big(\frac{x_{1}}{N_{1}},\dots,\frac{x_{m}}{N_{m}}\Big)
\ \ \ \ \ \ \mbox{in}\ \ \ \ \ \ 
\mathbb{R}^{m}_{>0}
$$
determines a multiplicative seminorm
\begin{equation}\label{divisorial seminorm}
|-|_{v}:K(X)\longrightarrow\mathbb{R}_{\ge0}
\end{equation}
defined as follows. Each rational function $f\in\widehat{\mathcal{O}}_{\mathcal{X}\!,\xi}$ admits an {\em admissible decomposition}, meaning a decomposition of the form
\begin{equation}\label{admiss decomp}
f=\sum_{u\in\mathbb{Z}^{m}_{\ge0}}a_{u}T^{u}
\ \ \ \ \ \ \mbox{with}\ \ \ \ \ \ 
a_{u}=\left\{
\begin{array}{l}
\mbox{a unit in }\widehat{\mathcal{O}}_{\mathcal{X}\!,\xi}
\\
\mbox{or }0\mbox{ otherwise}
\end{array}
\right.
\end{equation}
where $T=(T_{1},\dots,T_{m})$ is a regular system of local parameters in $\mathcal{O}_{\mathcal{X}\!,\xi}$ that give local equations for the components of $\mathcal{X}_{k}$ at $\xi$ (see \cite[Proposition 2.4.6]{MN} for details). When $f$ is in $\mathcal{O}_{\mathcal{X}\!,\xi}$ with an admissible decomposition (\ref{admiss decomp}), the seminorm $|-|_{v}$ returns
$$
|f|_{v}
\ \ =\ \ 
\underset{a_{u}\ne0}{\underset{u\in\mathbb{Z}^{m}_{\ge0}}{\mathrm{max}_{\ }}}\ \frac{1}{e^{\langle u,v\rangle}}.
$$
Here $\langle-,-\rangle:\mathbb{Z}^{m}_{\ge0}\times\mathbb{R}^{m}_{>0}\longrightarrow\mathbb{R}_{\ge0}$ denotes the standard pairing. One checks that this gives a well-defined map $|-|_{v}:\mathcal{O}_{\mathcal{X},\xi}\longrightarrow\mathbb{R}_{\ge0}$ that extends to a multiplicative seminorm on $K(X)$, and we take this to be our seminorm (\ref{divisorial seminorm}) (see \cite[\S's 2.4 \& 3.1]{MN} for details). The resulting assignment $v\mapsto|-|_{v}$ induces a closed embedding
$$
\mathrm{Sk}:D(\mathcal{X}_{k})\hookrightarrow X^{\mathrm{an}}
$$
(see \cite[\S 3.1.3 \& Proposition 3.1.4]{MN}). We refer to its image as the {\em skeleton} of $X^{\mathrm{an}}$ associated to $\mathcal{X}$, and denote it $\mathrm{Sk}(\mathcal{X})$.

\subsection{Retracting to the skeleton over characteristic-0 Laurent series}\label{skeleton building} Let $k$ be an algebraically closed field of characteristic $0$, and equip the field $K=k((t))$ of formal Laurent series with the norm $|-|:k((t))\longrightarrow\mathbb{R}_{\ge0}$ defined by
$$
\left|\sum^{\infty}_{n=m}a_{n}t^{n}\right|=\frac{1}{e^{m}}
\ \ \ \ \ \ \mbox{for}\ \ \ \ \ \ a_{m}\ne0.
$$
This norm is non-archimedean, and $k((t))$ is complete with respect to the topology that it induces. The resulting ring of integers in $k((t))$ is the ring $R=k[[t]]$ of formal power series, with maximal ideal $\mathfrak{m}=(t)$ and residue field $k=k[[t]]/(t)$.

Let $X$ be an irreducible, smooth proper variety over $k((t))$, then Nagata's embedding theorem \cite{Voj} provides us with a flat proper $k[[t]]$-model $\mathcal{X}$ of $X$ such that $\mathcal{X}$ is normal. We have no assurance, a priori, that $\mathcal{X}$ is an snc-model. However, because we work over a residue field $k$ of characteristic $0$, and because $X$ is smooth, we can use Hironaka's resolution of singularities to replace $\mathcal{X}$ with such a model if need be. After making this replacement, the construction of the previous section provides us with a skeleton
$$
\mathrm{Sk}(\mathcal{X})\ \subset\ X^{\mathrm{an}}.
$$
Nicaise and Xu prove the following homotopy equivalence result.
 
 \begin{proposition}\label{homotopyofdual}\!\!
{\bf \cite[Theorem 3.1.3]{NX}.}
There exists a continuous retraction
	$$
	\rho_{\mathcal{X}}:X^{\mathrm{an}}\longrightarrow D(\mathcal{X}_{k})
	$$
that fits into a strong deformation retraction of $X^{\mathrm{an}}$ onto $\mathrm{Sk}(\mathcal{X})$ under the identification of $D(\mathcal{X}_{k})$ with $\mathrm{Sk}(\mathcal{X})\subset X^{\mathrm{an}}$.
 \end{proposition}

\begin{remark}\label{remark: homotopy equivalence}
Let $f:X\longrightarrow Y$ be a morphism of smooth projective $k((t))$-varieties. Because $X$ and $Y$ are projective, they admit respective proper snc-models $\mathcal{X}$ and $\mathcal{Y}$ over $k[[t]]$. After finitely many blowups in the special fibers of $\mathcal{X}$ and $\mathcal{Y}$, we can assume that $f$ extends to a morphism $\mathfrak{f}:\mathcal{X}\longrightarrow\mathcal{Y}$, and that the intersection of any subset of irreducible components of the special fiber in either $\mathcal{X}$ or $\mathcal{Y}$ has a single component \cite[Remark 10]{dFKX}. The latter condition on $\mathcal{X}$ and $\mathcal{Y}$ implies that if we let $I_{\mathcal{X}}$ denote the set of irreducible components of the special fiber $\mathcal{X}_{k}$, then we can realize the dual complex $D(\mathcal{X}_{k})$ as a subcomplex of $\mathbb{R}^{I_{\mathcal{X}}}$, and similarly for $\mathcal{Y}_{k}$.  Indeed, if we let $N_i$ denote the multiplicity of the $i^{\mathrm{th}}$ component of $\mathcal{X}_{k}$, then we can realize $D(\mathcal{X}_{k})$ as the subset of the $(\# I_{\mathcal{X}}-1)$-simplex $\Delta_{I_{\mathcal{X}}}\overset{{}_{\mathrm{def}}}{=}\big\{(x_{1},\dots,x_{n})\in\mathbb{R}^{I_{\mathcal{X}}}:\sum N_i x_i =1\big\}$ consisting of exactly those faces $\Delta_{J}\subset\Delta_{I_{\mathcal{X}}}$ corresponding to subsets $J\subset I_{\mathcal{X}}$ for which $\bigcap_{j\in J}D_{j}\ne\mbox{\O}$, and likewise for $D(\mathcal{Y}_{k})$.

	In \cite[\S2]{Yu}, Yu shows that any such $\mathfrak{f}$ induces a map
	$$
	S_{\mathfrak{f}}:D(\mathcal{X}_{k})\longrightarrow D(\mathcal{Y}_{k}).
	$$
In detail, if $\mathrm{Div}_{0}(\mathcal{X})$ denotes the space of Cartier divisors on $\mathcal{X}$ supported in $\mathcal{X}_{k}$, then we have a canonical isomorphism $\mathrm{Div}_{0}(\mathcal{X})^{\ast}_{\mathbb{R}}\cong\mathbb{R}^{I_{\mathcal{X}}}$. The morphism $\mathfrak{f}:\mathcal{X}\longrightarrow\mathcal{Y}$ induces an $\mathbb{R}$-linear map
	$$
	\mathfrak{f}^{\ast}:\mathrm{Div}_{0}(\mathcal{Y})_{\mathbb{R}}\longrightarrow \mathrm{Div}_{0}(\mathcal{X})_{\mathbb{R}}.
	$$
By definition, $S_{\mathfrak{f}}:D(\mathcal{X}_{k})\longrightarrow D(\mathcal{Y}_{k})$ is the restriction of $\mathfrak{f}^{\ast}$'s dual $\mathrm{Div}_{0}(\mathcal{X})^{\ast}_{\mathbb{R}}\longrightarrow\mathrm{Div}_{0}(\mathcal{Y})^{\ast}_{\mathbb{R}}$. We can recover much of the data of $S_{\mathfrak{f}}$ from the combinatorics of the restriction of $\mathfrak{f}$ to the special fibers $\mathcal{X}_{k}$ and $\mathcal{Y}_{k}$, as the following lemma explains.

\begin{lemma}\label{combinatoricsofspecial}
Let $\mathfrak{f}:\mathcal{X}\longrightarrow\mathcal{Y}$ be a morphism of proper snc $k[[t]]$-varieties, such that the dual complex $D(\mathcal{X}_{k})$ is simplicial. For a divisor $D$, we denote the corresponding $0$-simplex of the dual complex by $\sigma_D$ .
\begin{enumerate}

\item Let $D$ be an irreducible divisor in the special fiber $\mathcal{X}_{k}$ whose image under $\mathfrak{f}$ is a divisor $E$ in $\mathcal{Y}$. Then the induced map $S_{\mathfrak{f}}$ sends $\sigma_D$ to $\sigma_E$.

\item Suppose $g: D(\mathcal{X}_{k})\longrightarrow D(\mathcal{Y}_{k})$ is a piecewise linear map such that $g(\sigma_D)=S_{\mathfrak{f}}(\sigma_D)$ for every divisor $D$. Then $S_{\mathfrak{f}}=g$.
%Suppose $D_1, \ldots, D_m$ are irreducible divisors in $\mathcal{X}_k$ whose images under $\mathfrak{f}$ are mutually distinct divisors in $\mathcal{Y}$. Then for any sub-simplicial complex $\tau \subset D(\mathcal{X}_{k})$ whose $0$-simplices are contained in $\{ \sigma_{D_1}, \ldots, \sigma_{D_m} \}$, the map $S_{\mathfrak{f}}$ restricts to an isomorphism from $\tau$ onto its image.

\end{enumerate}

\end{lemma}
\begin{proof}
For the first statement, we have that for any component $E'$ of  $\mathcal{Y}_k$, either $E'=E$, or $D$ has coefficient $0$ in $\mathfrak{f}^*(E')$. Conversely,  because the coefficient of $D$ in $f^*E$ is not $0$, the dual of $\mathfrak{f}^*$ sends the dual of $D$ to the dual of $E$. 

The second statement follows by the fact that $S_{\mathfrak{f}}$ is piecewise linear.
%from the first. Indeed, the map $S_{\mathfrak{f}}$ is given by first computing the map on $0$-simplices and then extending linearly.
\end{proof}

	Because the retraction maps in \cite{NX} and \cite{Yu} coincide, Yu's \cite[Proposition 2.10]{Yu} tells us that $S_{\mathfrak{f}}$ fits into a commutative diagram
	$$
	\xymatrix{
	X^{\mathrm{an}}
	\ar[d]_{f^{\mathrm{an}}}
	\ar[r]^{\rho_{\mathcal{X}}\ \ }
	&
	D(\mathcal{X}_{k})
	\ar[d]^{S_{\mathfrak{f}}}
	\\
	Y^{\mathrm{an}}
	\ar[r]_{\rho_{\mathcal{Y}}\ \ }
	&
	D(\mathcal{Y}_{k})\ .\!\!\!
	}
	$$
Thus by Proposition \ref{homotopyofdual}, to prove that $f^{\mathrm{an}}$ is a homotopy equivalence, it suffices to construct a section $D(\mathcal{Y}_{k})\!\xymatrix{{}\ar@{^{(}->}[r]&{}}\!D(\mathcal{X}_{k})$ that is a homotopy inverse to $S_{\mathfrak{f}}$.

One would like to construct this section using techniques from MMP directly on the morphism $\mathfrak{f}:\mathcal{X}\longrightarrow\mathcal{Y}$. Unfortunately, the necessary MMP techniques are not established over rings like $k[[t]]$ which are not of finite type over a field of characteristic $0$. Instead, we have to use a ``spreading out" argument in order to transfer our task to a setting where the needed MMP techniques are known.
\end{remark}

%%%%%%%%%%%%%%%%%%%%%%%%%%%%%%%%%%%%%%

\section{Rationally Connected Varieties}\label{rcvarieties}

\subsection{Definitions and first properties} The details in this section are well known to experts; we refer to \cite{Deb} and \cite{Kol}.

One way to distinguish birational classes of varieties is by considering the sets of rational curves they contain. If $X$ is a smooth proper variety over an algebraically closed field, we say that $X$ is rationally connected if given any two closed points $p$ and $q$ of $X$, there is a map $f: \mathbb{P}^1 \longrightarrow X$ such that both $p$ and $q$ are in the image of $f$.  Projective $n$-space is rationally connected for instance, since any two closed points of $\mathbb{P}^n$ are contained in a line. Many other familiar varieties are also rationally connected, including Grassmanians, toric varieties, and smooth Fano varieties.

Because we will be working over $k((t))$, which is not algebraically closed, we will need a slightly more general definition than the one above.

\begin{definition}\label{rationally connected def} Assume that $F$ is a field of characteristic $0$. A proper, geometrically irreducible scheme $X$ over $F$ is {\em rationally connected} if there exists a variety $M$ over $F$, and a morphism $u: M \times\mathbb{P}^1 \longrightarrow X$ such that the induced map $u^{(2)}: M \times\mathbb{P}^1\times\mathbb{P}^1 \longrightarrow X \times_F X$ is dominant.
\end{definition}

This definition follows \cite[Definition 4.3]{Deb}, and corresponds to the definition of a \emph{separably rationally connected} scheme in \cite[Definition IV.3.2]{Kol}. The two notions are equivalent in characteristic $0$. Note that in Definition \ref{rationally connected def}, it is equivalent to take $u$ to be a rational map instead of a morphism. When the field $F$ is algebraically closed and $X$ is smooth, this definition is equivalent to the condition that every pair of closed points of $X$ be connected by a rational curve.

Definition \ref{rationally connected def} behaves well under birational modification: if $X'$ is birational to $X$, then $X'$ is rationally connected if and only if $X$ is. Thus all rational varieties are rationally connected. Rationally connected varieties also enjoy the additional property that the image of a rationally connected variety is rationally connected:

\begin{proposition}
Let $X$ be a rationally connected scheme over $F$, and let $f: X \longrightarrow Z$ be a surjective morphism of proper $F$-schemes. Then $Z$ is rationally connected.

\end{proposition}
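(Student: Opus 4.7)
The plan is to verify that $Z$ satisfies Definition \ref{rationally connected def} by reusing the same parameter variety $M$ that witnesses rational connectivity of $X$. Choose a variety $M$ over $F$ and a morphism $u : M \times \mathbb{P}^{1} \lra X$ for which $u^{(2)} : M \times \mathbb{P}^{1} \times \mathbb{P}^{1} \lra X \times_{F} X$ is dominant. Set $v := f \circ u$, and observe that the two-pointed evaluation factors tautologically as
\[
v^{(2)} \ =\ (f \times_{F} f) \circ u^{(2)} : M \times \mathbb{P}^{1} \times \mathbb{P}^{1} \lra Z \times_{F} Z.
\]
It therefore suffices to check that $v^{(2)}$ is dominant.

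The key ingredient is the surjectivity of $f \times_{F} f$, which I would deduce by writing it as the composition
\[
X \times_{F} X \ \lra\ Z \times_{F} X \ \lra\ Z \times_{F} Z
\]
of $f \times_{F} \mathrm{id}_{X}$ followed by $\mathrm{id}_{Z} \times_{F} f$. Each arrow is a base change of the surjection $f$, and surjectivity of morphisms of schemes is preserved under arbitrary base change, so each factor is surjective and hence so is $f \times_{F} f$. Composing a surjection with a dominant map preserves density of images: for any continuous map $h$ and any subset $A$ of its source one has $\overline{h(A)} = \overline{h(\overline{A})}$, so applying this with $h = f \times_{F} f$ and $A$ equal to the image of $u^{(2)}$, denseness of $A$ in $X \times_{F} X$ together with surjectivity of $h$ yields that $v^{(2)}$ has image dense in $Z \times_{F} Z$.

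I do not anticipate any serious obstacle. The content of the proposition is simply that Definition \ref{rationally connected def} is functorial under surjective morphisms, and the argument reduces to a one-line diagram chase once the relevant fiber-product manipulations are laid out. The mild subtlety that the definition quantifies over a parameter variety rather than over a pair of geometric points actually streamlines the argument, since it lets us reuse $M$ with no further construction.
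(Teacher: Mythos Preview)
Your argument is correct and is exactly the paper's approach: compose $u$ with $f$ and use that surjectivity of $f$ (hence of $f\times_F f$) preserves dominance of $u^{(2)}$. You have simply filled in the details the paper leaves implicit.
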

\begin{proof}
We have $u: M \times \mathbb{P}^1 \longrightarrow X$, with $u^{(2)}$ dominant. Compose to get a morphism $f\circ u: M\times \mathbb{P}^1 \longrightarrow Z$, and observe that surjectivity of $f$ implies that the induced map $M \times \mathbb{P}^1 \times \mathbb{P}^1 \longrightarrow Z \times Z$ is dominant.
\end{proof}
\subsection{Rational connectivity in families} Rational connectivity behaves well in families. If $\pi:Y\longrightarrow S$ is a proper, smooth map of varieties over an algebraically closed field $F$ of characteristic $0$, then the locus of points $s\in S$ with rationally connected fiber $Y_{s}$ is both open in $S$ and is a countable union of closed subsets of $S$ \cite[Corollary IV.3.5.2  \& Theorem IV.3.11]{Kol}. In particular, every fiber $Y_{s}$ is rationally connected as soon as one of these fibers is.

When $F$ is algebraically closed of characteristic $0$, we say that a morphism $\pi:Y\longrightarrow S$ of $F$-varieties is a {\em rationally connected fibration} if $\pi$ is proper, and if the fiber of $\pi$ over a general point of $S$ is rationally connected. This implies that every fiber contained in the smooth locus of $\pi$ is rationally connected. From the above discussion, we immediately have:

\begin{proposition}\label{rcfib}
Let $S$ be a variety with generic point $\nu$, and let $\pi:Y\longrightarrow S$ be a proper map. If the fiber $Y_\nu$ is rationally connected, then $\pi$ is a rationally connected fibration.
\end{proposition}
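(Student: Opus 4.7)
My plan is a standard spreading-out argument that transfers rational connectedness from $Y_\nu$ (a variety over the typically non-closed field $F := K(S)$) to $Y_s$ for a general closed point $s \in S$. First I would unpack Definition~\ref{rationally connected def}: the hypothesis yields an $F$-variety $M_\nu$ together with a morphism $u_\nu : M_\nu \times_F \mathbb{P}^1_F \lra Y_\nu$ whose associated map $u_\nu^{(2)} : M_\nu \times_F \mathbb{P}^1_F \times_F \mathbb{P}^1_F \lra Y_\nu \times_F Y_\nu$ is dominant. Because $M_\nu$ is integral and $\mathbb{P}^1_F$ is geometrically integral, the source of $u_\nu^{(2)}$ is integral; dominance then forces $Y_\nu \times_F Y_\nu$ to be irreducible, and hence $Y_\nu$ to be geometrically irreducible over $F$.

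Next I would spread $(M_\nu, u_\nu)$ out to a Zariski open neighborhood of $\nu$. Writing $F = \varinjlim_{\nu \in U \sub S}\Gamma(U, \Oc_S)$, the standard limit formalism (EGA~IV.8) produces a non-empty affine open $U \sub S$, a $U$-variety $\Mc$, and a morphism $\tilde u : \Mc \times_U \mathbb{P}^1_U \lra Y_U := \pi^{-1}(U)$ whose fiber above $\nu$ recovers $u_\nu$. Its relative square $\tilde u^{(2)} : \Mc \times_U \mathbb{P}^1_U \times_U \mathbb{P}^1_U \lra Y_U \times_U Y_U$ has generic fiber $u_\nu^{(2)}$, and since $Y_U \times_U Y_U$ is irreducible with the same generic point as $Y_\nu \times_F Y_\nu$, dominance of the generic fiber implies dominance of $\tilde u^{(2)}$ over $U$.

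The final step is to descend this dominance to individual fibers. By Chevalley's theorem, the image of $\tilde u^{(2)}$ contains a non-empty Zariski open $V \sub Y_U \times_U Y_U$. After further shrinking $U$ and invoking EGA~IV.9.7.7, I may assume that every fiber $Y_s$ with $s \in U$ is geometrically irreducible, so that $Y_s \times Y_s$ is irreducible. Then for each closed point $s \in U$ in the image of the projection $V \lra U$, the intersection $V \cap (Y_s \times Y_s)$ is a non-empty---and therefore dense---open subscheme of $Y_s \times Y_s$, and being contained in the image of $\tilde u_s^{(2)}$, it forces $\tilde u_s^{(2)}$ itself to be dominant. Thus the pair $(\Mc_s, \tilde u_s)$ certifies rational connectedness of $Y_s$, completing the argument.

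The principal obstacle I anticipate is precisely this last passage: dominance of a morphism over $U$ does not in general restrict to dominance on individual fibers. Bridging that gap requires propagating geometric irreducibility of $Y_\nu$ to nearby fibers of $\pi$, which is exactly the role played by constructibility of fiberwise geometric irreducibility (EGA~IV.9.7.7).
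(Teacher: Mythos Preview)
Your argument is correct and follows essentially the same route as the paper's proof: spread the data $(M_\nu,u_\nu)$ out to an open $U\subset S$, invoke Chevalley so that the image of $\tilde u^{(2)}$ contains a nonempty open, and conclude that this open meets the fiber over a general $s$ in a dense set. The one place where you are more careful than the paper is in justifying why a nonempty open of $Y_s\times Y_s$ is automatically dense: you first deduce geometric irreducibility of $Y_\nu$ from dominance of $u_\nu^{(2)}$ and then propagate it to nearby fibers via EGA~IV.9.7.7, whereas the paper's proof leaves this step implicit.
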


%\begin{proof}
%Let $\nu$ be the generic point of $S$. We have a variety $M$ defined over $\nu$ with a map $u:M\times\mathbb{P}^1\longrightarrow Y_\nu$ such that the induced map $u^{(2)}: M\times\mathbb{P}^1\times\mathbb{P}^1\longrightarrow Y_{\nu}\times_{\nu}Y_{\nu}$ is dominant. The variety $M$ and the map $u$ are defined over some affine open in $S$, so there is an open set $V$ in $S$ such that we have a variety $M_V$ and a rational map $u_V: M_V \times\mathbb{P}^1 \longrightarrow Y_V$ such that the induced map $u_V^{(2)}$ is dominant. The image of $M_V\times\mathbb{P}^1\times\mathbb{P}^1$ is a constructible set in $Y_V$.

%Thus for a general point $p$ in $V$ the image is dense in the fiber $Y_p$, and therefore $Y_p$ is rationally connected.
%\end{proof}

\subsection{Rational connectivity and the Minimal Model program} Loosely speaking, the prevalence of rational curves on a variety measures the failure of its canonical divisor to be positive, and so being rationally connected is a strict condition on the birational geometry of a variety $X$.  For example, if $X$ is a smooth Fano variety ($-K_X$ is ample) then we expect to find many rational curves on $X$, and in fact such an $X$ is always rationally connected \cite{Cam, KMM}. While the converse is not true, in a certain sense rationally connected varieties can be built out of Fano varieties using the minimal model program.

The minimal model program is a program for classifying algebraic varieties up to birational equivalence \cite{KM}. Given a smooth projective variety $X$, the main goal of the MMP is to produce a variety $\widetilde{X}$ birational to $X$, with at worst mild singularities, such that the canonical divisor $K_{\!\widetilde{X}}$ is nef (that is, nonnegative on every curve). One calls the variety $\widetilde{X}$ a \emph{minimal model} of $X$.

The strategy for producing a minimal model of $X$ is to systematically eliminate curves $C$ in $X$ satisfying $K_X\cdot C <0$. This is accomplished by either contracting a divisor $D$ in $X$ covered by such curves, or by applying a birational operation called a flip. Assuming that the procedure of repeatedly applying these two operations terminates after finitely many steps, the end result is either a minimal model or a map $\widetilde{X} \longrightarrow Z$ of relative Picard number $1$, such that $-K_{\!\widetilde{X}}$ is relatively ample. The map $\widetilde{X} \longrightarrow Z$ is called a \emph{Mori fiber space}. In general, termination of flips is a difficult open problem in birational geometry. However, Birkar, Cascini, Hacon, and McKernan \cite{BCHM} established the existence of minimal models for varieties of general type by showing termination of flips for a special MMP called the {\em MMP with scaling}. 

The operations constituting the MMP are closely related to the existence of rational curves on the variety $X$: if there exists a curve $C$ in $X$ such that $K_X \cdot C<0$, then there is always a rational curve with this same property. Whenever we contract a divisor in running the MMP, we do so by contracting a family of rational curves that cover the divisor.

Having a covering family of rational curves on a smooth variety $X$ precludes the existence of a section of $nK_X$ for any positive $n$. In fact slightly more is true. Recall that $N^{1}(X)_{\mathbb{Q}}$ denotes the group of $\mathbb{Q}$-Cartier divisors on $X$ modulo numerical equivalence, and that $N^{1}(X)_{\mathbb{Q}}$ is a finite dimensional $\mathbb{Q}$-vector space. A $\mathbb{Q}$-Cartier divisor $D$ on $X$ is {\em pseudoeffective} if the class of $D$ in $N^{1}(X)_{\mathbb{R}}=N^{1}(X)_{\mathbb{Q}}\otimes\mathbb{R}$ lies inside the closure of the cone spanned by effective $\mathbb{Q}$-Cartier divisors.

\begin{proposition}\label{notpsef}
If $X$ is a smooth and rationally connected projective positive dimensional variety over an algebraically closed field, then $K_X$ is not pseudoeffective.
\end{proposition}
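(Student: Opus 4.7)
The plan is to invoke the Boucksom-Demailly-P\u{a}un-Peternell (BDPP) characterization of pseudoeffectiveness: for a smooth projective variety $X$ over an algebraically closed field of characteristic zero, $K_X$ is pseudoeffective if and only if $X$ is not uniruled. Granting this, I need only verify that rationally connected varieties are uniruled.

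To see this, note that Definition \ref{rationally connected def} furnishes a variety $M$ and a morphism $u:M\times\mathbb{P}^1\lra X$ whose square $u^{(2)}$ is dominant; in particular $u$ itself is dominant, so the family of rational curves $\{u(\{m\}\times\mathbb{P}^1)\}_{m\in M}$ sweeps out a Zariski-dense subset of $X$. This is exactly the definition of uniruledness. Applying BDPP then immediately yields that $K_X$ is not pseudoeffective.

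If one prefers to see the essential mechanism explicitly rather than appeal to the full BDPP theorem, an alternative is to produce a \emph{very free} rational curve $f:\mathbb{P}^1\lra X$, whose existence in characteristic zero follows from Koll\'ar's criterion for (separably) rationally connected varieties \cite[Theorem~IV.3.7]{Kol}. For such $f$, the bundle $f^*T_X$ is ample on $\mathbb{P}^1$, so by adjunction $-K_X\cdot f_*[\mathbb{P}^1]=\deg f^*T_X>0$. Since unobstructed deformations of $f$ dominate $X$, the curve class $[C]:=f_*[\mathbb{P}^1]$ lies in the closure of the movable cone of curves. The easier, duality half of BDPP --- that a pseudoeffective divisor pairs nonnegatively with every movable curve class --- then forces $K_X\cdot[C]\ge 0$ whenever $K_X$ is pseudoeffective, contradicting $K_X\cdot[C]<0$.

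The only substantive obstacle is the appeal to BDPP (or to Koll\'ar's very-free existence theorem in the alternative argument); the remaining steps are formal. The characteristic-zero hypothesis enters both in identifying rational connectivity with separable rational connectivity and in guaranteeing the existence of very free rational curves.
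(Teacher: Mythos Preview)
Your proposal is correct, but the paper takes a more elementary route that avoids BDPP entirely. The paper uses the existence (from \cite[Ex.~4.7, Cor.~4.11]{Deb}) of a covering family of rational curves with $K_X\cdot C\le -2$; then for any ample $A$ and sufficiently small $\lambda>0$ one still has $(K_X+\lambda A)\cdot C<0$, and since a general member of the family avoids any fixed proper closed subset, no multiple of $K_X+\lambda A$ can have a section. This directly shows $K_X$ is not pseudoeffective. Your alternative argument via very free curves and the ``easy half'' of BDPP is essentially the same mechanism, just packaged in the language of the movable cone rather than sections; the paper's phrasing has the virtue of being completely self-contained. Your primary argument via the full BDPP theorem is correct but invokes a much deeper result (proved analytically over $\mathbb{C}$, so strictly speaking a Lefschetz-principle step is needed for a general algebraically closed field of characteristic~$0$) than the situation requires.
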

Note that this is true under the weaker hypothesis that $X$ is uniruled.
\begin{proof}
By \cite[Ex 4.7, Cor 4.11]{Deb}, there is a family of rational curves on $X$ such that 
\begin{enumerate}
\item $-K_X\cdot C \leq -2 $ for any curve $C$ in the family.

\item For a general point $p \in X$, there is a curve $C$ in the family passing through $p$.

Choose any ample divisor $A$ on $X$. Then for sufficiently small $\lambda >0$, $\lambda A \cdot C <2$. Thus $(K_X+\lambda A)\cdot C<0$. Since given any closed subset of $X$ we can choose $C$ to not be contained in that set, we must have that no multiple of $(K_X+\lambda A) $ has a section.
\end{enumerate}
\vskip -.6cm
\end{proof}

As a consequence, by \cite[Cor 1.3.3]{BCHM}, running any MMP with scaling on a smooth rationally connected $X$ produces a Mori fiber space $\tilde{X} \longrightarrow  Z$. The base $Z$ is the image of a rationally connected variety, so it too is rationally connected. If we repeatedly run an MMP with scaling starting with a rationally connected variety, we produce a long sequence of divisorial contractions, flips, and Mori fibrations terminating at a single point.

%%%%%%%%%%%%%%%%%%%%%%%%%%%%%%%%%%%%%%

\section{Approximation by curve models}\label{spreading out section}

\subsection{Rationally connected spreading out} Throughout the remainder of this article, we fix an algebraically closed field $k$ of characteristic $0$, and let $K=k((t))$ with ring of integers $R=k[[t]]$.

The present section is devoted to a proof that if $f:X\longrightarrow Y$ is a surjective morphism of smooth projective $k((t))$-varieties, then we can approximate any snc $k[[t]]$-model $\mathcal{X}\longrightarrow\mathcal{Y}$ of this morphism by a morphism over a smooth curve. Moreover, we can do so in a way that preserves the morphism's general relative smoothness and relative rational connectivity when these properties hold over general points of $Y$. While this follows from the fact that rational connectedness is a geometric property, we include a proof for completeness.

\begin{lemma}\label{basechange}
Let $X$ be a geometrically irreducible proper variety over a field $L$ of characteristic $0$. Let $L'$ be a field extension of $L$. Then $X$ is rationally connected only if $X_{L'}$ is.
\end{lemma}

In fact the converse also holds but we will not need this fact.
\begin{proof}
Assume $X_{L'}$ is rationally connected. Then we have a variety $M$ over $L'$ with a map $f:M \times \mathbb{P}_{L'}^1 \longrightarrow X_{L'}$ such that the induced map
$$
f^{(2)}\ \!:\ M\times\mathbb{P}^{1}\times\mathbb{P}^{1}\xrightarrow{\ \ \ \ \ }X\underset{\!\!\!L'\!\!\!}{\times}X
$$
is dominant. Let $G$ be the graph of $f$. By spreading out \cite[\S 8 \& Th\'eor\`eme 11.6.1]{Groth} we can find a finitely generated $L$-subalgebra $A$ of $L'$, along with varieties $M_A$ and $G_A$ over $A$, such that $G_A\subset M_A\times \mathbb{P}^1 \times X_A$ and both $M$ and $G$ are given by basechange to $L'$. 

We now want to show that $G$ is the graph of a rational map from $M_A \times \mathbb{P}^1$ to $X_A$.

Note that we have a commutative diagram of function fields:
$$
\xymatrix{
k(G_A)\ar[r] & k(G) 
\\ k(M_A\times\mathbb{P}^1)\ar[r]\ar[u] & k(M\times\mathbb{P}^1) \ar[u]
\\ k(A) \ar[r] \ar[u] & L'\ar[u]
}
$$
The bottom square is co-Cartesian, as is the large square. Hence the top square is also co-Cartesian. Since $k(G)$ has degree $1$ over $k(M\times\mathbb{P}^1)$, this implies that $k(G_A)$ has degree $1$ over $k(M_A\times\mathbb{P}^1)$. This gives us an open subset $U \subset M_A\times\mathbb{P}^1$ equipped with a morphism $\phi:U\longrightarrow X$. Note that $\phi$ induces a morphism
$$
\phi^{(2)}\ \!:\ U\underset{M_A}{\times}U\xrightarrow{\ \ \ \ \ \ }X_A\underset{\! A\!}{\times}X_A.
$$
Because $f^{(2)}:M\times\mathbb{P}^{1}\times\mathbb{P}^{1}\longrightarrow X\times_{L'} X$ is dominant, this morphism $\phi^{(2)}$ is also dominant.
By resolving singularities of $M_A$, we can choose $M'_A$ so that the rational map
$$
\phi'\ \!:\ M'_A \times\mathbb{P}^{1}\dashrightarrow X_A
$$
induced by $\phi$ is defined away from a codimension-$2$ subset.  Let $M''_A$ be the complement of the image of this set in $M'_A$.  Thus we have a map $\phi':M''_\eta\times\mathbb{P}^{1}\longrightarrow X_A$ such that the induced map
$$
\phi'^{\ \!(2)}\ \!:\ M''_A \times\mathbb{P}^{1}\times\mathbb{P}^{1}
\xrightarrow{\ \ \ \ \ \ \ }
X_A\underset{\!A\!}{\times}X_A
$$
is dominant, since this map agrees with $\phi^{(2)}$ on an open set. Composing $\phi$ with the base change $X_A \rightarrow X$ gives a map 
$$
\zeta\ \!:\ M''_A \times\mathbb{P}^{1}\longrightarrow X
$$
and the induced map $\zeta^{(2)}$ is dominant. Since $M''_A$ is finite type over the ground field $k$, $X$ is rationally connected.
\end{proof}

\begin{proposition}\label{curvemodel}
Let $f:X\longrightarrow Y$ be a surjective morphism of smooth projective varieties over $k((t))$. Assume that for a general geometric point $q$ of $Y$, the fiber $X_q$ is smooth and rationally connected. If $\mathfrak{f}:\mathcal{X}\longrightarrow\mathcal{Y}$ is a map of snc $k[[t]]$-models that extends $f$, then for any positive integer $n$ there exists a pointed affine curve $C\ni p$ over $k$, along with a map $g_C:V_{C}\longrightarrow W_{C}$ of regular $C$-varieties satisfying:

\begin{enumerate}
\item The special fibers $V_p$ and $W_p$ are snc inside $V_C$ and $W_C$.

\item There are isomorphisms $\nu_1: (\mathcal{X}_k)_n \to (V_p)_n$ and $\nu_2: ( \mathcal{Y}_k)_n \to (W_p)_n$ on the $n$th order neighborhoods inducing the following commutative diagram:

\begin{equation}
\xymatrix{
(\mathcal{X}_k)_n\ar[r] \ar[d]& (V_p)_n\ar[d]
\\ (\mathcal{Y}_k)_n\ar[r]& (W_p)_n 
}
\end{equation}
%\item There are isomorphisms $\nu_1: \mathcal{X}_k \to V_p$ and $\nu_2:  \mathcal{Y}_k \to W_p$ such that for any irreducible components $D$ and $E$ in $\mathcal{X}_k$ and $\mathcal{Y}_k$ respectively, the image of $D$ under $\mathfrak{f}$ is $E$ if and only if the image of $\nu_1(D)$ under $g_{C}$ is $\nu_2(E)$.

\item The fiber of $g_C$ over a general point $q$ of $W_p$ is rationally connected.
\end{enumerate}
\end{proposition}
\begin{proof}

As above, we can apply \cite[\S 8 \& Th\'eor\`eme 11.6.1]{Groth} to choose an affine $k$-variety $S$, along with varieties $V$, $W$, and a $\Theta \subset V \times_{S} W$ such that $V$, $W$, and $\Theta$ pull back to $\mathcal{X}$, $\mathcal{Y}$, and the graph $\Gamma$, respectively. We may assume $\Theta$ is isomorphic to $V$ over $S$, so that $\Theta$ gives the graph of a morphism $g:V \longrightarrow W$. Let $A$ denote the coordinate ring of $S$.

The construction of the curve $C$ and flat families $V_{C}\longrightarrow C$ and $W_{C}\longrightarrow C$ proceeds exactly as in the proof of \cite[Proposition 5.1.2]{MN}: Given a positive integer $n$, we use Greenberg approximation \cite{Green} to find a morphism
\begin{equation}\label{morphism to Henselization}
A\longrightarrow\mathcal{O}^{\ \!\mathrm{h}}_{\!\mathbb{A}^{\!1}_{k},(t)}
\end{equation}
to the Henselization of the stalk at the origin in $\mathbb{A}^{\!1}_{k}$, such that the composite of (\ref{morphism to Henselization}) with the projection
$$
\widehat{\mathcal{O}^{\ \!\mathrm{h}}_{\!\mathbb{A}^{\!1}_{k},(t)}}\ \cong\ k[[t]]
\longrightarrow
k[t]/(t^{n})
$$
factors through a morphism $A\longrightarrow k[[t]]/(t^{n})$. Because the Hensilization is the stalk for the \'etale topology, we can find a smooth curve $C$ and a morphism
\begin{equation}\label{curve morphism}
C\longrightarrow\mathrm{Spec}_{\ \!}A
\end{equation}
through which the morphism dual to (\ref{morphism to Henselization}) factors. If we define $V_{C}$ and $W_{C}$ to be the pullbacks of $V$ and $W$ along this morphism (\ref{curve morphism}), then it follows that
\[
\mathrm{Spec}_{\ \!}k[t]/(t^n)\underset{C}{\times}V_{C}
\ \ \cong\ \ 
\mathrm{Spec}_{\ \!}k[t]/(t^n)\underset{\!\!k[[t]]\!\!}{\times}\mathcal{X}
\ \ \cong\ \ 
\mathrm{Spec}_{\ \!}k[t]/(t^n)\underset{S}{\times}V
\]
and similarly for the triple $W_{C}$, $\mathcal{Y}$, and $W$, as well as for $\Theta_C, \Gamma,$ and $\Theta$. We have now established (ii).
Next, consider the flat locus of $g: V \longrightarrow W$. This is an open set, so its image in $S$ is open. Let $T \subset S$ be the complement. Now, because the map $\mathrm{Spec}_{\ \!}k[[t]] \longrightarrow S$ is dominant, it does not factor through $T$. Thus there is some $n \geq 2$ such that the induced map from $\mathrm{Spec}_{\ \!}k[t]/(t^n)$ does not factor through $T$. Thus we may assume that  $C$ is not contained in $T$.

The requirement that $\mathcal{X}$ and $\mathcal{Y}$ be snc over $k[[t]]$ includes the requirement that they are both regular. Because the Zariski tangent space at points in the special fiber depends only on the fiber over $\mathrm{Spec}_{\ \!}k[t]/(t^2)$, this implies that $V_{C}$ and $W_C$ are both smooth in a neighborhood of their respective special fibers.  Localizing if necessary, we may therefore choose $C$ so that $V_C$ and $W_C$ are smooth \cite[Proposition 5.1.2.(6)]{MN}.  In particular, this implies that a general fiber of the morphism from $V \longrightarrow W$ is smooth.

We have that the generic fiber of the morphism $X \longrightarrow Y$ is rationally connected, but this is the base change of the generic fiber of $g: V \longrightarrow W$, so by Lemma \ref{basechange} the generic fiber of $g$ is rationally connected. 

Note that irreducibility of $Y$ implies that $W$ is irreducible.

Because rational connectedness is both open and closed in smooth families, we have that for closed points in the flat locus of $g$, the fiber of $g$ is rationally connected whenever it is smooth. Since both $V_C$ and $W_C$ are smooth, and $C$ is not contained in $T$, this means that over a general point of $W_C$, the fiber of $g_C$ is rationally connected.
\end{proof}

%%%%%%%%%%%%%%%%%%%%%%%%%%%%%%%%%%%%%%

\section{Homotopy type of the dual complex}\label{applying dFKX}

\subsection{Singularities of the MMP} De Fernex, Koll\'{a}r, and Xu have shown recently that running the MMP on certain varieties with snc divisors preserves the homotopy type of the dual complex  \cite{dFKX}. As a result they are able to show the following
\begin{theorem}\cite[Thm 1]{dFKX}
Let $X$ be an isolated log terminal singularity. Then for any log resolution, the dual complex of the exceptional divisor is contractible.
\end{theorem}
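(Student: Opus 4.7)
My plan is to run a relative minimal model program on a log resolution of $X$, tracking the dual complex of the exceptional divisor at each step. The key idea is that every MMP step either preserves the homotopy type of the dual complex or simplifies it in a controlled way, so that when the MMP terminates one is left with a dual complex that is manifestly contractible.

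I would begin by fixing a log resolution $\pi: Y \lra X$ with reduced snc exceptional divisor $E = \sum E_i$, and first establish that the homotopy type of $\mathcal{D}(E)$ does not depend on the choice of log resolution. By the Weak Factorization Theorem of Abramovich--Karu--Matsuki--W{\l}odarczyk, any two log resolutions of $X$ are connected by a sequence of blow-ups and blow-downs at smooth centers meeting $E$ transversally; a direct local computation shows that each such modification replaces the open star of the corresponding simplex of $\mathcal{D}(E)$ by the join of its boundary with a new simplex, an operation that preserves the homotopy type.

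Next, I would run an MMP with scaling on the pair $(Y, (1-\varepsilon)E)$ over $X$ for sufficiently small $\varepsilon > 0$. Log-terminality of $X$ makes this pair klt and guarantees, by \cite{BCHM}, that the MMP terminates. Each divisorial contraction collapses one of the components $E_i$ to a subvariety; up to homotopy this replaces $\mathcal{D}(E)$ by the complex obtained by deleting the open star of $[E_i]$ and retracting onto its link. The crucial point, proved by induction on $\dim X$, is that this link is itself the dual complex of the exceptional divisor of a lower-dimensional isolated log-terminal singularity, hence is contractible, so the deletion is a homotopy equivalence. Each log flip modifies the exceptional divisor only along a locus of codimension at least two, and a parallel local analysis shows the dual complex is unchanged up to homotopy.

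Finally, the MMP terminates at a relative Mori fiber space over $X$; because $X$ is an isolated singularity, all of $Y$ is eventually contracted onto the singular point, and the dual complex at the final stage is either a single simplex or a cone on a lower-dimensional contractible complex, and is therefore contractible. The main obstacle will be the inductive claim about divisorial contractions: one must show that the link of a vertex $[E_i]$ in $\mathcal{D}(E)$ really does correspond to the dual complex of an exceptional divisor over a lower-dimensional isolated log-terminal singularity, which requires a delicate analysis of how klt singularities and their dual complexes behave under contractions, together with an appropriate adjunction argument identifying the link as the dual complex of a resolution of the adjoint singularity.
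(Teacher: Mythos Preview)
The paper does not contain a proof of this statement. It is quoted verbatim from \cite{dFKX} as background for the later arguments; the only proofs in Section~\ref{applying dFKX} are those of Theorem~\ref{mapdegen} and of Theorem~\ref{main}. So there is nothing in the paper to compare your proposal against.

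That said, your sketch does aim at the actual argument in \cite{dFKX}, and the broad strategy---run a relative $(K_Y+(1-\varepsilon)E)$-MMP over $X$ and track the dual complex through each step---is the right one. Two points deserve correction. First, the MMP here does \emph{not} terminate in a Mori fiber space: $\pi:Y\to X$ is birational, so no step can drop the dimension of the base, and termination is instead in a minimal model over $X$. Log terminality of $X$ is exactly what forces every exceptional divisor to be contracted along the way (the relative log canonical divisor is an effective exceptional combination, hence never nef until it vanishes); the contractibility conclusion then comes from the fact that the sequence of collapses passes through a stage where the dual complex is a single point. Second, the mechanism you propose for divisorial contractions---identifying the link of a vertex with the dual complex of a lower-dimensional isolated log terminal singularity and invoking induction---is not how \cite{dFKX} proceed, and it is not clear that such an identification can be made in general. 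Their argument instead shows directly that each MMP step induces an elementary collapse (or a PL isomorphism) of the dual complex, using a careful analysis of how the strata of $\Delta$ meet the flipping or contracted locus; this is the content of the results cited in the paper as \cite[Cor.~22]{dFKX} and \cite[Thm.~3]{dFKX}.
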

\noindent
Applying their techniques to the case of a degeneration of rationally connected varieties, they are able to show

\begin{theorem}\cite[Thm 4]{dFKX}\label{degen}
Let $X\longrightarrow C$ be a degeneration of rationally connected varieties over a curve, with special fiber $X_0$. Assume $X$ is smooth and that $X_0$ is snc. Then the dual complex of $X_0$ is contractible.
\end{theorem}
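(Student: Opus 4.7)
The plan is to induct on the relative dimension $d=\dim X-\dim C$, using a relative MMP to reduce the problem to a Mori fiber space and then comparing dual complexes up and down that fibration. Throughout, the pair $(X,X_0)$ is dlt (in fact log smooth) because $X$ is smooth and $X_0$ is a reduced snc divisor, so $\mathcal{D}(X_0)$ is defined and the techniques of \cite{dFKX} are directly applicable.

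The base case $d=0$ is trivial: $X\to C$ is birational to $C$, so $X_0$ is a single reduced point and $\mathcal{D}(X_0)$ is a point. For the inductive step, I would first run a $K_X$-MMP with scaling of a sufficiently ample divisor, relative to the base curve $C$. Because the generic fiber $X_\eta$ is rationally connected, Proposition \ref{notpsef} gives that $K_{X_\eta}$ is not pseudoeffective, and so by the relative version of \cite[Cor.~1.3.3]{BCHM} this MMP terminates with a relative Mori fiber space $\pi^\ast:X^\ast\lra Z$ over $C$, where $\dim Z<\dim X$. The Mori fiber space $\pi^\ast$ has rationally connected (Fano) general fibers, so its total space is rationally connected; since $X^\ast\lra C$ also realizes $Z\lra C$ as a degeneration of rationally connected varieties (after resolution), the inductive hypothesis will apply to a suitable snc model of $Z\to C$.

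The heart of the argument is to show that \emph{each} step of this relative MMP preserves the homotopy type of $\mathcal{D}(X_0)$. Specifically, for a step $\varphi_i: X_i\dashrightarrow X_{i+1}$, one writes $X_{i,0}$ and $X_{i+1,0}$ for the corresponding special fibers (taking the strict transform), and shows $\mathcal{D}(X_{i,0})\simeq \mathcal{D}(X_{i+1,0})$. There are three cases: a divisorial contraction of an exceptional divisor $E$, a flip, and the final Mori fibration. In each of the first two, one works in the relative $\mathbb{Q}$-factorial dlt category over $C$ and applies the core results of \cite{dFKX} (simple-homotopy type of the dual complex is invariant under dlt-MMP operations on reduced snc pairs). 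One must check along the way that the dlt property of $(X_i, X_{i,0})$ is preserved, which follows because $X_{i,0}$ is pulled back from $C$ and hence is always an integral part of the log boundary.

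For the final Mori fibration $\pi^\ast: X^\ast\lra Z$, let $Z^\dagger\to Z$ be a log resolution so that $Z^\dagger\to C$ has snc special fiber $Z^\dagger_0$; by the inductive hypothesis $\mathcal{D}(Z^\dagger_0)$ is contractible. The task is then to prove that the dual complex of the special fiber of a (resolution of) $X^\ast\to C$ retracts onto $\mathcal{D}(Z^\dagger_0)$ with contractible fibers. Here one uses that the fibers of $\pi^\ast$ are rationally connected Fano varieties, together with a relative version of Theorem \ref{main} applied fiberwise over the strata of $\mathcal{D}(Z^\dagger_0)$. The hardest step is establishing the homotopy invariance of $\mathcal{D}(X_{i,0})$ under flips: flips can drastically rearrange components and strata of the snc divisor, and one must track carefully how the pieces of the dual complex are glued before and after. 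Once this technical invariance is in place, contractibility of $\mathcal{D}(X_0)$ follows from contractibility of $\mathcal{D}(Z^\dagger_0)$ by the induction.
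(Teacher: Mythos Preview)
The paper does not give its own proof of Theorem~\ref{degen}; it is quoted from \cite[Thm~4]{dFKX}. The nearest in-paper analogue is Theorem~\ref{mapdegen}, whose specialization to $Y=C$ recovers Theorem~\ref{degen}, so I compare your outline to that argument.

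Your overall architecture---run a relative MMP over $C$, cite \cite{dFKX} for invariance of the dual complex under divisorial contractions and flips, then handle the terminal Mori fiber space by passing to its base and inducting on dimension---is correct and matches both \cite{dFKX} and the paper's proof of Theorem~\ref{mapdegen}. Two points diverge.

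First, your treatment of the Mori fiber space $\pi^\ast:X^\ast\to Z$ is circular and more elaborate than necessary. You propose to compare $\mathcal D(X^\ast_0)$ with $\mathcal D(Z^\dagger_0)$ via ``a relative version of Theorem~\ref{main} applied fiberwise over the strata.'' But Theorem~\ref{main} is proved \emph{using} Theorem~\ref{mapdegen}, which is the relative form of the very statement you are proving; invoking it here loops back on itself. The paper's route avoids this entirely: because $\pi^\ast$ has relative Picard number~$1$, the pair $(Z,\Delta_Z)$ is qdlt by \cite[Prop.~40]{dFKX} and the dual complexes of $(X^\ast,\Delta_{X^\ast})$ and $(Z,\Delta_Z)$ are \emph{canonically identified}---there is a bijection on strata, not merely a retraction with contractible fibers. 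One then takes a log resolution $Z'\to Z$, invokes \cite[Thm.~3]{dFKX} to preserve the homotopy type, and inducts. No appeal to rational connectivity of the Mori fibers is needed at this step beyond knowing that $Z'\to C$ again satisfies the hypotheses.

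Second, a smaller technical point: to apply \cite[Cor.~1.3.3]{BCHM} you need a klt pair, but $(X,X_0)$ is only dlt. The paper handles this by running the MMP for the klt perturbation $\big(X,\Delta_X-\varepsilon\,\pi^{-1}(p)\big)$; since $\pi^{-1}(p)$ is numerically trivial over $C$, this is simultaneously a $(K_X+\Delta_X)$-MMP and therefore preserves the dlt property required to invoke \cite[Cor.~22]{dFKX}. Your bare ``$K_X$-MMP with scaling'' can be made to work for the same reason, but you should say explicitly why the dlt structure on the boundary survives each step.
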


Log terminal singularities are a natural class of singularities arising in the MMP. Examples of log terminal singularities are provided by quotient singularities and Gorenstein rational singularities. We will need to introduce some related notions for pairs $(X,\Delta)$ so that we may apply the techniques of \cite{dFKX}. The standard reference is \cite{KM}.

Let $X$ be a normal variety over an algebraically closed field of characteristic $0$, and let $\Delta$ be an effective $\mathbb{Q}$-divisor on $X$ such that $K_X+\Delta$ is Cartier. Let $f:Y \longrightarrow X$ be a {\em log resolution} of $(X,\Delta)$. This means that:
\vskip .2cm
\begin{itemize}
\item[{\bf (i)}] The variety $Y$ is smooth;
\item[{\bf (ii)}] The map $f$ is proper and birational;
\item[{\bf (iii)}] If we let $\{E_i\}$ be the set whose elements are all components of the exceptional divisors of $f$ and all components of the strict transform of $\Delta$, then the $E_i$ form a simple normal crossing divisor.
\end{itemize}
\vskip .2cm
 
We can compare the divisor $K_X+\Delta$ on $X$ with its counterpart $K_Y+\sum E_i$ in $Y$. These must agree away from the $E_i$, and so we have
$$
K_Y+\sum E_i\ =\ f^*(K_X+\Delta)+\sum a_i E_i
$$
where the pullback of $K_X+\Delta$ is a well defined $\mathbb{Q}$-divisor, and where the $a_i$ are rational numbers, called the {\em log discrepancies}. If every one of the log discrepancies satisfies $a_i \geq 0$, then we say that $(X, \Delta)$ is \emph{log canonical}, or \emph{lc}. If each of the log discrepancies satisfies $a_i >0$, then we say that $(X, \Delta)$ is \emph{Kawamata log terminal}, or \emph{klt}. We say that $X$ itself is \emph{log terminal} if the pair $(X,0)$ is klt.

A subvariety $V \subset X$ is called a {\em log canonical center} if it is of the form $f(E_i)$ for some $E_i$ with vanishing log discrepancy: $a_i=0$.  The pair $(X,\Delta)$ is called \emph{divisorial log terminal}, or \emph{dlt}, if $(X,\Delta)$ is log canonical and for every log canonical center $V$ of $(X,\Delta)$, there is a neighborhood of the generic point of $V$ in which $(X,\Delta)$ becomes snc. The related notion of \emph{quotient-dlt}, or \emph{qdlt}, is less well known, so we refer to \cite[Section 5]{dFKX}. The pair $(X,\Delta)$ is qdlt if every codimension-$d$, log canonical center of $(X,\Delta)$ is locally the intersection of $d$ distinct components of $\Delta$, each with coefficient $1$. The name ``quotient dlt'' comes from the fact that a qdlt singularity is characterized by being locally a quotient of a dlt singularity by an abelian group \cite[Prop 34]{dFKX}.

These singularity classes obey the following logical dependencies: 
$$
\text{klt}  \Rightarrow \text{dlt} \Rightarrow \text{qdlt} \Rightarrow \text{lc}
$$

\begin{example}
Let $X=\mathbb{A}^2$ and let $\Delta$ be the coordinate axes both with coefficient $1$. Then $(X,\Delta)$ is dlt, qdlt, and lc, but not klt, and the log canonical centers are $V(x)$, $V(y)$, and $V(x,y)$. Let $X'$ be the affine quadric cone and $\Delta'$ the union of two lines on this cone. Then $(X',\Delta')$ is the quotient of $(X,\Delta)$ by a $\mathbb{Z}/2$ action which preserves the components of $\Delta$, so $(X',\Delta')$ is qdlt and lc. However, $(X',\Delta')$ is not dlt (nor klt) because it is not snc in a neighborhood of the vertex of the cone, which is a log canonical center.
\end{example}

The first important property of these types of singularities is that each class is preserved under running a $K_X+\Delta$ MMP. More precisely, if $g: X \dashrightarrow X'$ is a divisorial contraction or a flip for the pair $(X,\Delta)$, which is klt/dlt/qdlt/lc, then the pair $(X', g_* \Delta)$ is also klt/dlt/qdlt/lc. If $(X,\Delta)$ is qdlt we define the dual complex $D(\Delta^{=1})$ to be the intersection complex of the divisors that have coefficient $1$ in $\Delta$. If every divisor of $\Delta$ has coefficient $1$, we just write $D(\Delta)$ for brevity.

\subsection{Proof of the Main Theorem} Before giving a proof of Theorem \ref{main}, we need to establish an analogous result for a morphism of varieties over a curve. Say that a projective qdlt pair $(X,\Delta)$ is \emph{tidy} if every component of $\Delta$ has coefficient $1$, and if the dual complex is a simplicial complex. The latter means that the intersection of any subset of components of $\Delta$ is either empty or irreducible. In the sequel, if $(X,\Delta_X)$ and $(Y,\Delta_Y)$ are tidy projective qdlt pairs, and $f:X \dashrightarrow Y$ is a rational map such that $f_*\Delta_X=\Delta_Y$, then we say that a morphism of simplicial complexes $\alpha: D(\Delta_Y) \longrightarrow  D(\Delta_X)$ is a \emph{combinatorial section} for $f$ if $\alpha$ is an inclusion and if for every component $E$ in $\Delta_Y$, we have $f(\zeta_{\alpha(E)})=\zeta_E$, where $\zeta_E$ and $\zeta_{\alpha(E)}$ are the generic points of $E$ and $\alpha(E)$, respectively. 

\begin{theorem}\label{mapdegen}
Let $C$ be an affine curve over an algebraically closed field $k$ of characteristic $0$, and let $p$ be a $k$-valued point of $C$. Let $X$ and $Y$ be projective $C$-varieties
$$
\pi_X: X \longrightarrow C
\ \ \ \ \ \ \mbox{and}\ \ \ \ \ \ 
\pi_Y: Y \longrightarrow C
$$
and let $\Delta_X$ and $\Delta_Y$ be the respective reduced fibers of $X$ and $Y$ over $p$. Assume that $(X,\Delta_X)$ and $(Y,\Delta_Y)$ are both tidy qdlt pairs, and let $f:X \longrightarrow Y$ be a surjective morphism over $C$ such that the general fiber $X_q$ is rationally connected, and such that either $f$ is birational or $K_{X/Y}$ is not pseudoeffective. 

Then there exists a tidy $\mathbb{Q}$-factorial qdlt pair $(Y',\Delta_{Y'})$ such that the following holds: There is a birational map $f':X \dashrightarrow Y'$ that has a combinatorial section $\mu: D(Y') \to D(X)$ that is a homotopy equivalence, there is a birational morphism $\psi:Y'\to Y$ which is an isomorphism over the special fiber, and the composition of these maps agrees with $f$ away from the special fiber. In fact $(Y',\Delta_{Y'})$ is a minimal model of $(Y,\Delta_Y)$.\end{theorem}

The proof of this theorem is adapted directly from \cite{dFKX}. The key point is that under these hypotheses, each birational step of the MMP and each Mori fiber space induces such a homotopy equivalence. Before starting the proof, we need a few lemmas to ensure that in the positive relative dimension case we always have that $K_{X/Y}$ is not pseudoeffective. This follows automatically by Proposition \ref{notpsef} when $X_q$ is smooth, or even terminal, but these properties are not preserved for the base of a Mori fiber space. Thus we need to take a partial resolution preserving the dual complex.

\begin{lemma}\label{exceptionaldivs}
Let $(X,\Delta)$ be a qdlt pair. Suppose $f:Z\longrightarrow X$ is a birational morphism, and $E$ is an exceptional divisor on $Z$ with log discrepancy less than or equal to $1$ relative to the pair $(X,\Delta)$. Then either $f(E)$ is contained in a component of $\Delta$, or $f(E)$ contains no log canonical center of $(X,\Delta)$.

\end{lemma}

\begin{proof}
Suppose $f(E)$ contains a log canonical center $W$ of $(X,\Delta)$, but that $f(E)$ is not contained in any component of $\Delta$.  Since $(X,\Delta)$ is qdlt there is a neighborhood $U$ of $W$ in $X$ such that $U$ has a finite cover by an snc pair $(U', \Delta')$ branched only along the components of $\Delta_{U}$. Since $f(E)$ has log discrepancy less than or equal to $1$ and is not contained in any component of $\Delta$, the scheme $U$ must be singular along $f(E)$. This is a contradictions the fact that $U'$ is smooth along the preimage of $f(E)$, which lies outside of the branch locus.
\end{proof}

\begin{lemma}\label{partialresolution}
Let $f:X \longrightarrow  Y$ be a morphism of projective varieties over a pointed curve $p \in C$, such that the general fiber of $f$ is positive dimensional and rationally connected. Let $\Delta$ be the reduced special fiber of $\pi: X \longrightarrow  C$. Suppose that $X$ is $\mathbb{Q}$-factorial and that the pair $(X,\Delta)$ is qdlt. Then there exists a pair $(X',\Delta')$, along with a birational morphism $g:X' \longrightarrow  X$, such that $\Delta'$ is the reduced special fiber of $X'$ over $C$, $K_{X'/Y}$ is not pseudoeffective, and $g$ is an isomorphism in the neighborhood of the generic point of any log canonical center of $(X',\Delta')$. In particular, $(X',\Delta')$ is qdlt and $g_*$ induces an isomorphism from the dual complex of $(X',\Delta')$ to that of $(X,\Delta)$, with inverse $g^{-1}_*$, and $(X',\Delta')$ is tidy if $(X,\Delta)$ is.
\end{lemma}

\begin{proof}
Choose a log resolution $h:Z\longrightarrow  X$ of the pair $(X,\Delta)$. We have that 

\[
K_Z + \Lambda = f^*(K_X+\Delta)
\]
where all coefficients of $\Lambda$ are at most $1$ (possibly negative) and every divisor in the support of $\Lambda$ is either a strict transform of a divisor in $\Delta$ or is exceptional. As $h:Z \longrightarrow  X$ is a log resolution, the support of $\Lambda$ is snc. Let $\tilde{\Lambda}$ be the sum of terms of $\Lambda$ that lie in the special fiber. Then we may choose $\Lambda'$ supported on the special fiber such that $\Lambda' \geq \tilde{\Lambda}$, such that $\Lambda'$ is dlt and every coefficient of $\Lambda'$ is positive. In particular, the strict transform of each component of $\Delta$ has coefficient $1$ in $\Lambda'$.
 
For small $\varepsilon>0$, the pair $(Z,\Lambda'-\varepsilon h^*(\pi^*(p)))$ is klt. If we let $\Theta$ be the sum of the exceptional divisors contained in the special fiber, then for sufficiently small $\delta>0$, the pair $(Z,\Gamma)=(Z,\Lambda'-\varepsilon h^*(\pi^*(p))+\delta \Theta)$ is also klt.

Run an MMP with scaling for the pair $(Z,\Gamma)$ over $(X,\Delta-\varepsilon \pi^*(p))$ to get a birational map $\alpha: Z \dashrightarrow X'$ and a birational morphism $g:X' \longrightarrow  X$, with $X'$ normal and $\mathbb{Q}$-factorial. The divisor 
	$$
	-E\ =\ (K_{X'}+\alpha_*\Gamma)-g^*\big(K_X+\Delta-\varepsilon \pi^*(p)\big)
	$$
is exceptional and $g$-nef, so by the negativity lemma \cite[Lemma 3.39]{KM}, $E$ is effective. Since $X$ is normal and $\mathbb{Q}$-factorial, the exceptional locus of $g$ has pure codimension $1$. This lets us write
\[
g^*(K_X+\Delta)\ =\ K_{X'}+\alpha_*\Gamma+\varepsilon\ \!g^*\big(\pi^*(p)\big)+E\ =\ K_{X'}+\alpha_*(\Lambda'+\delta\Theta) + E .
\]

Since $(X,\Delta)$ is log canonical, the pair $(X', \alpha_*\Gamma+\varepsilon g^*(\pi^*(p))+E)$ is also log canonical. Let $F$ be an exceptional divisor of $h:Z\longrightarrow  X$. We wish to know when $F$ is contracted by $\alpha$. 

Suppose $F$ is not contracted by $\alpha$. Then the coefficient of $F$ in $K_Z+\Lambda'+\delta\Theta-f^*(K_X+\Delta)$ is the coefficient of $\alpha_* F$ in $-E$, thus cannot be positive. In particular, every component of $\Theta$ is contracted by $\alpha$. Let $\Delta'=\alpha_*(\Lambda'+\delta\Theta)$, and then $\Delta'$ consists of all components of the special fiber of $X'$ with multiplicity $1$. Since $(X', \Delta'+E)$ is log canonical, so is $(X',\Delta')$. 

Consider an exceptional divisor $G$ of $g$. Since $K_X'+\Delta'+E=g^*(K_X+\Delta)$, with $E$ effective, we have that $G$ has log discrepancy at most $1$ relative to the pair $(X,\Delta)$. We also know that the divisor $G$ does not lie in the special fiber, because its strict transform in $Z$ would be a component of $\Theta$, and thus contracted by $\alpha$. Hence the image $g(G)$ is not contained in any log canonical center of $(X,\Delta)$, and by Lemma \ref{exceptionaldivs}, $g(G)$ contains no log canonical center of $(X,\Delta)$. This means that the exceptional divisors of $g$ cannot contain any log canonical centers of $(X',\Delta')$, and therefore $g$ is an isomorphism in a neighborhood of any generic point of any  log canonical center of $(X',\Delta')$. Likewise, $g^{-1}$ is an isomorphism in a neighborhood of the generic point of any log canonical center of $(X,\Delta)$. Hence the map $g_*$ induces an isomorphism from the dual complex of $(X',\Delta')$ to that of $(X,\Delta)$, and $g^{-1}_*$ induces its inverse.

Let $(\tilde{X}, \tilde{\Delta})$ be a log resolution $\beta: \tilde{X} \longrightarrow  X'$. Write
\[
K_{\tilde{X}/Y}=K_{X'/Y}+a_i E_i + b_j F_j,
\]
where the $E_i$ are supported away from the special fiber and the $F_j$ are supported on the special fiber. We have that each $a_i\geq 0$, and each $b_j \geq -1$. Choose $m>0$ so that $c_j F_j=m\alpha^*(\pi^*(p))$  satisfies $c_j>-b_j$ for each $i$.

Since the variety $\tilde{X}$ is smooth and a general fiber $\tilde{X}_q$ over $Y$ is rationally connected, we have by Prop \ref{notpsef} that $K_{\tilde{X}/Y}$ is not pseudoeffective. But
\[
K_{\tilde{X}/Y}=K_{X'/Y}+a_i E_i +(c_j+b_j)F_j.
\]
So $K_{X'/Y}$ is not pseudoeffective either.
%Let $\Lambda' consist of the divisors in $\Lambda$ which are exceptional over $X$. Then we may choose $\delta>0$ such that for any $t\in [0,1]$, $(Y,\Lambda-t\epsilon h^*(\pi^*(p))+t\delta\Lambda' )=Y,\Gamma_t)$ is klt. 
\end{proof}

\begin{proof}[Proof of Thm \ref{mapdegen}]
Choose a small $\varepsilon>0$, and consider the pair $\big(\ \!X,\ \!\Delta_X- \varepsilon \pi^{-1}(p)\ \!\big)$. This is a klt pair, so by \cite{BCHM} we can run MMP with scaling for this pair over $Y$. Because $\varepsilon \pi^{-1}(p)$ is numerically trivial, the MMP with scaling is also a $K_X+\Delta_X$ MMP, and it preserves the property of being qdlt. We can always choose our MMP to also preserve the property of being $\mathbb{Q}$-factorial. Now either $X$ and $Y$ have the same dimension, and the MMP terminates at $W=Y'$, or $K_{X_q}$ is not pseudoeffective and the MMP must terminate in a Mori fiber space $h:W \longrightarrow  Z$, and since $X_q$ is rationally connected, so is $Z_q$. In either case, the birational map $\beta: X \dashrightarrow W$ is a composition of flips and divisorial contractions.

Set $\Delta_W=\beta_*\Delta_X$. By \cite[Cor 22]{dFKX}, we know that $\beta^{-1}_*$ identifies $\Delta_W$ with a sub-complex of $\Delta_X$ and that $\Delta_X$ collapses onto $\Delta_W$. Thus $(W,\Delta_W)$ is tidy and the inclusion induced by $\beta^{-1}_*$ is a combinatorial section of $\beta$ and also a homotopy equivalence. In addition, the divisor $\Delta_W$ is the reduced special fiber of the morphism $W \longrightarrow C$. If $W$ is birational to $Y$ then take $(Y',\Delta_{Y'}) = (W,\Delta_W)$ and then the fact that $(W,\Delta_W)$ is minimal over $Y$ implies that the only exceptional divisors for the birational morphism $\psi:Y' \to Y$ are components of the special fiber.

Otherwise we have a Mori fiber space $h:W \longrightarrow  Z$. Let $\Delta_Z$ be the reduced special fiber of $Z \longrightarrow  C$. By \cite[Prop 40]{dFKX}, the pair $(Z,\Delta_Z)$ is qdlt, and push forward by $h$ induces an isomorphism between $D(\Delta_Z)$ and $D(\Delta_W)$. Thus the inverse is a combinatorial section for $h$, and a homotopy equivalence. Now either $Z$ is birational to $Y$, or using Lemma \ref{partialresolution}, we produce a birational map $g:Z' \longrightarrow  Z$, such that if $\Delta_{Z'}$ is the reduced special fiber of $Z'$, then $(Z',\Delta_{Z'})$ is qdlt and $g_{*}$ induces an isomorphism between the dual complexes $D(\Delta_Z)$ and $D(\Delta_{Z'})$. Hence $Z'$ is tidy and the inverse of $g_{\ast}$ is a combinatorial section for $g$ and a homotopy equivalence. Moreover, the fiber $Z'_q$ over a general point of $Y$ is rationally connected, and $K_{Z'/Y}$ is not pseudoeffective.

Consider the rational map $g \circ h\circ\beta: X \dashrightarrow Z'$. Since $g$, $h$ and $\beta$ have combinatorial sections that are also homotopy equivalences, so does $g \circ h\circ\beta$. Since $Z'$ has strictly smaller dimension than $X$, we may assume inductively that the map $\gamma: Z' \longrightarrow  Y'$ has a combinatorial section that is a homotopy equivalence. Thus so does $f'$.
\end{proof}

\begin{remark}
Let $(X,\Delta_X)$ be an snc degeneration of smooth rationally connected varieties over a pointed curve $p \in C$. Then we may apply Thm \ref{mapdegen} with $(Y,\Delta_Y)=(C,p)$. The combinatorial section distinguishes a component $D$ of $\Delta_X$, which is birational to a tower of Mori fiber spaces. Thus, by \cite{GHS}, $D$ is rationally connected. By running a Galois equivariant MMP, we may relax the requirement that $k$ is algebraically closed, as long as the $k$-irreducible components of the special fiber have simple normal crossings. This gives an affirmative answer to \cite[Question 11]{Kol2} in the simple normal crossings case. See also \cite{HX}.
We are very grateful to Chenyang Xu for pointing this out.

\end{remark}

To complete the analogue of Theorem \ref{main} over a curve, we must compare the dual complexes of $(Y,\Delta_Y)$ and $(Y', \Delta_{Y'})$. These are PL homeomorphic \cite[Prop 11]{dFKX}, but we will need to specify the homeomorphism.

Note that because $Y$ and $Y'$ are tidy, the morphism $\psi:Y'\longrightarrow Y$ provided by Theorem \ref{mapdegen} gives us a map $S_{\psi}: D(\Delta_{Y'}) \to D(\Delta_Y)$ induced by the dual to the pullback $\psi^*$ as in Remark \ref{remark: homotopy equivalence}.

\begin{lemma}\label{PLhomeo}
The map $S_{\psi}$ is a piecewise linear homeomorphism. 
\end{lemma}

\begin{proof}
The pullback is linear so its dual restricts to a piecewise linear map on the dual complexes. Since we are considering a piecewise linear map it suffices to show that $S_{\psi}$ is bijective. But each complex can be viewed as a space of valuations on $K(Y)$. For an element $x \in K(Y)$ and a valuation $v$, we have that $S_{\psi}(v)(x)=v(\psi^*(x))=v(x)$. Thus $S_{\psi}$ is injective.

The points of $D(\Delta_Y)$ with rational coordinates are exactly the divisorial valuations of $D(Y,\Delta_Y)$ with log discrepancy $0$. As $(Y',\Delta_{Y'})$ is a minimal model, these valuations have log discrepancy $0$ for $(Y',\Delta_{Y'})$. Thus $S_{\psi}$ surjects onto this dense set, so it is surjective.
\end{proof}

\begin{proof} [Proof of Thm \ref{main}]
As described in Remark \ref{remark: homotopy equivalence}, we can extend the map $f: X \longrightarrow Y$ to a map on proper snc-models $\mathfrak{f}: \mathcal{X} \longrightarrow \mathcal{Y}$ over $k[[t]]$ that induces a map $S_{\mathfrak{f}}:D(\mathcal{X}_{k})\longrightarrow D(\mathcal{Y}_{k})$. It suffices to construct a section $D(\mathcal{Y}_k)\!\xymatrix{{}\ar@{^{(}->}[r]&{}}\!D(\mathcal{X}_{k})$ of $S_{\mathfrak{f}}$ that is a homotopy inverse of $\mathfrak{f}$.

If $\mathfrak{f}$, $\mathcal{X}$, and $\mathcal{Y}$ are pulled back from a curve, then we can produce a combinatorial section $\mu$ of $f':X \to Y'$ using Theorem \ref{mapdegen} and the fact that $\tau$ is a homotopy equivalence. We have that $S_{\mathfrak{f}}\circ \tau = S_{\psi}$ by Lemma \ref{combinatoricsofspecial} since all these maps are piecewise linear and agree on the $0$-simplices. Finally, $S_{\psi}$ is an isomorphism by Lemma \ref{PLhomeo} so $S_{\mathfrak{f}}$ has a section that is a homotopy equivalence. The theorem follows.

Now for the other case. Any reduced divisor $E_j$ of $Y$ contained in the special fiber pulls back to a divisor $\sum a_{ij} D_i$ in $X$. Choose a positive integer $n$ larger than any of the $a_{ij}$. By Proposition \ref{curvemodel} we can produce a pointed $k$-curve $C\ni p$, along with a map of smooth varieties $g_C: V_C \longrightarrow W_C$ over $C$, such that:
\begin{itemize}
\item[$\bullet$]
The fiber over a general point of $W$ is smooth and rationally connected;
\item[$\bullet$]
The special fiber $V_p$ is snc inside $V$ and $W_p$ is snc inside $W$;
\item[$\bullet$]
There are isomorphisms $\nu_1: (\mathcal{X}_k)_n \longrightarrow (V_p)_n$ and $\nu_2:  (\mathcal{Y}_k)_n \longrightarrow (W_p)_n$ inducing the following commuting square.
\begin{equation}
\xymatrix{
(\mathcal{X}_k)_n\ar[r] \ar[d]& (V_p)_n\ar[d]
\\ (\mathcal{Y}_k)_n\ar[r]& (W_p)_n 
}
\end{equation}
%such that for any irreducible components $D$ and $E$ in $\mathcal{X}_k$ and $\mathcal{Y}_k$ respectively, the image of $D$ under $f$ is $E$ if and only if the image of $\nu_1(D)$ under $g_{C}$ is $\nu_2(E)$. 
\end{itemize}

By the previous argument, there is a section of $S_{g_C}$
	\begin{equation}\label{equation: section}
	\xymatrix{D(W_p)\ \ar@{^{(}->}[r]&\ D(V_p)}
	\end{equation}
which is a homotopy equivalence. The isomorphisms $\nu_1$ and $\nu_2$ induce isomorphisms between $D(\mathcal{X}_k)$ and $D(V_p)$ and also between $D(\mathcal{Y}_k)$ and $D(W_p)$. The condition on $n$ implies that we get a commuting square 
\begin{equation}
\xymatrix{
D(\mathcal{X}_k)\ar[r] \ar[d]& D(V_p)\ar[d]
\\ D(\mathcal{Y}_k)\ar[r] & D(W_p)
}
\end{equation}
where the vertical arrows are $S_{\mathfrak{f}}$ and $S_{g_C}$ respectively, and the horizontal arrows are isomorphisms. Hence $S_{\mathfrak{f}}$ is a homotopy equivalence, and thus so is $f^{\mathrm{an}}$.
%which is a homotopy equivalence. Choose $n$Using the isomorphisms of infinitessimal neighborhoods, we may identify $D(W_p)$ with $D(\mathcal{Y}_k)$ and $D(V_p)$ with $D(\mathcal{X}_k)$ so that the section  (\ref{equation: section})  gives a map $\alpha: D(\mathcal{Y})_k \longrightarrow D(\mathcal{X})_k$ which is both a homotopy equivalence and a combinatorial section for $\mathfrak{f}$. By Lemma \ref{combinatoricsofspecial}, the composition $S_{\mathfrak{f}}\circ\alpha: D(\mathcal{Y})_k \longrightarrow D(\mathcal{Y})_k$ is an isomorphism. Hence $S_{\mathfrak{f}}$ is a homotopy equivalence, and thus so is $f^{\mathrm{an}}$.

\end{proof}

\vskip 1cm
\bibliographystyle{plain}
\bibliography{rcberkovich}

\end{document}